\newcommand{\Cc}{\mathcal{C} }
\newcommand{\m}{\mathfrak{m} }
\newcommand{\K}{\mathcal{K} }
\newcommand{\D}{\mathcal{D} }
\newcommand{\T}{\mathcal{T} }
\newcommand{\C}{\mathcal{C} }
\newcommand{\A}{\mathcal{A} }
\newcommand{\W}{\mathcal{W} }
\newcommand{\Z}{\mathbb{Z} }
\newcommand{\Xb}{\mathbf{X}_\bullet}
\newcommand{\Zb}{\mathbf{Z}_\bullet}
\newcommand{\Yb}{\mathbf{Y}_\bullet}
\newcommand{\Kb}{\mathbf{K}_\bullet}
\newcommand{\Pb}{\mathbf{P}_\bullet}
\newcommand{\Qb}{\mathbf{Q}_\bullet}
\newcommand{\Fb}{\mathbf{F}_\bullet}
\newcommand{\Wb}{\mathbf{W}_\bullet}
\newcommand{\rt}{\rightarrow}
\newcommand{\ctensor}{\widehat{\otimes}}
\newcommand{\image}{\operatorname{image}}
\newcommand{\cone}{\operatorname{cone}}
\newcommand{\Mod}{\operatorname{Mod}}
\newcommand{\mmod}{\operatorname{mod}}
\newcommand{\rank}{\operatorname{rank}}
\newcommand{\proj}{\operatorname{proj}}
\newcommand{\Proj}{\operatorname{Proj}}
\newcommand{\Hom}{\operatorname{Hom}}
\newcommand{\cHom}{\operatorname{\mathcal{H}om}^c}
\newcommand{\End}{\operatorname{End}}
\theoremstyle{plain}
\newtheorem{theorem}{Theorem}[section]
\newtheorem{corollary}[theorem]{Corollary}
\newtheorem{lemma}[theorem]{Lemma}
\newtheorem{proposition}[theorem]{Proposition}
\theoremstyle{definition}
\newtheorem{definition}[theorem]{Definition}
\newtheorem{remark}[theorem]{Remark}
\theoremstyle{remark}
\begin{document}

\title[$2$-Periodic complexes]{$2$-Periodic complexes over regular local rings}
\author{Tony~J.~Puthenpurakal}
\date{\today}
\address{Department of Mathematics, IIT Bombay, Powai, Mumbai 400 076}

\email{tputhen@math.iitb.ac.in}
\subjclass{Primary  13D09,  16G70 ; Secondary 13H05, 13H10}
\keywords{$2$-periodic complexes, its homotopy category and bounded derived category, Auslander-Reiten triangles}

 \begin{abstract}
Let $(A,\mathfrak{m})$ be a  regular local ring of dimension $d \geq 1$. Let $\mathcal{D}^2_{fg}(A)$ denote the derived category of $2$-periodic complexes  with   finitely generated cohomology modules.
Let $\mathcal{K}^2(\proj A) $ denote the homotopy category of $2$-periodic complexes of finitely generated free $A$-modules.
 We show the natural  map $\mathcal{K}^2(\proj A) \rt \mathcal{D}^2(A)$ is an equivalence of categories. When $A$ is complete we show that $\mathcal{K}^2_f(\proj A)$ ($2$-periodic complexes with finite length cohomology) is Krull-Schmidt with   Auslander-Reiten (AR) triangles. We also compute the AR-quiver of  $\mathcal{K}^2_f(\proj A)$ when $\dim A = 1$.
\end{abstract}
 \maketitle
\section{introduction}
Let $\Gamma$ be an Artin algebra. The study of $m$-periodic  complexes over $\Gamma$ is well studied.
See \cite{Br}, \cite{CC}, \cite{Chen}, \cite{Chen-Deng}, \cite{S1} and \cite{S2}.
As exemplified by Auslander that concepts in the study of representation
theory of Artin algebras have natural analogues in study of  Cohen-Macaulay local rings. In this paper we study
$2$-periodic complexes over  commutative Noetherian
rings.

Let $(A, \m)$ be a Noetherian local ring.
We index complexes of $A$-modules cohomologically
$$\Xb \colon  \cdots \rt \Xb^{n-1} \xrightarrow{\partial_{n-1}} \Xb^n \xrightarrow{\partial_{n}} \Xb^{n+1} \rt \cdots.$$
By a $2$-periodic complex we mean a complex  $\Xb$ with $\Xb^{2n + 1} =\Xb^1$, $\Xb^{2n} =  \Xb^0$, $\partial_{2n + 1} = \partial_1$ and $\partial_{2n} = \partial_0$ for all $n \in \Z$.
Let $\A$ denote a full additive subcategory of $\Mod(A)$ (for most of our applications $\A$ will be $\Mod(A)$, $\mmod(A)$ or
$\proj(A)$).
One can define a category $\C^2(\A)$ of $2$-periodic $\A$-complexes with $2$-periodic chain maps (see \ref{senorita} for definition). It is readily verified that $\C^2(\A)$ is an additive category (and abelian if $\A$ is). We can define $2$-periodic chain-homotopy (see \ref{rachel} for definition) and define the $2$-periodic homotopy category $\K^2(\A)$. It is readily proved that $\K^2(A)$ is a triangulated category. By inverting quasi-isomorphisms one obtains the $2$-periodic derived category, $\D^2(\A)$. Our interest is to study $\D^2_{fg}(A) = \D^2_{fg}(\Mod A)$, the thick subcategory of $\D^2(\Mod A)$ consisting of $2$-periodic complexes $\Xb$ with $H^i(\Xb)$ a finitely generated $A$-module for all $i \in \Z$.

In general one cannot say much about $\D^2_{fg}(A)$. However the situation drastically improves when $A$ is regular local. Let $\K^2(\proj A) $ denote the homotopy category of $2$-periodic complexes of finitely generated free $A$-modules.
 We have an obvious exact functor $\phi \colon \K^2(\proj A) \rt \D^2_{fg}(A)$. We prove
 \begin{theorem}
 \label{first} Let $(A, \m)$ be regular local. Then $\phi$ is  an equivalence of triangulated categories.
 \end{theorem}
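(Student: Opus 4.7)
I will deduce the theorem from two inputs: (a) every acyclic $2$-periodic complex of finitely generated free $A$-modules is null-homotopic in $\K^2(\proj A)$; (b) every object $\Xb \in \D^2_{fg}(A)$ admits a quasi-isomorphism from some $\Fb \in \K^2(\proj A)$. Together with a standard calculus-of-fractions argument, (a) and (b) imply that $\phi$ is an equivalence of triangulated categories.

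\textbf{Null-homotopy of acyclic complexes (input (a)).} For such a $\Pb$ with $d_0\colon P^0 \to P^1$ and $d_1\colon P^1 \to P^0$, set $M = \ker d_0 = \image d_1$ and $N = \ker d_1 = \image d_0$. Acyclicity yields short exact sequences $0 \to M \to P^0 \to N \to 0$ and $0 \to N \to P^1 \to M \to 0$, which splice into a periodic projective resolution $\cdots \to P^1 \to P^0 \to P^1 \to P^0 \to M \to 0$; in particular $M$ is a $2k$-th syzygy of itself for every $k \geq 1$. Since $A$ is regular, $\operatorname{pd}_A M \leq \dim A$, so for $k$ large the $2k$-th syzygy is projective, forcing $M$ (and similarly $N$) to be projective. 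Both sequences then split, $P^0 \cong P^1 \cong M \oplus N$, and $\Pb$ decomposes as a direct sum of contractible two-term pieces with an explicit null-homotopy via the splitting isomorphisms.

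\textbf{Essential surjectivity (input (b)).} Given $\Xb$, I build $\Fb \to \Xb$ in stages. At Stage~$1$, pick surjections $F^0_1 \twoheadrightarrow H^0(\Xb)$ and $F^1_1 \twoheadrightarrow H^1(\Xb)$ from f.g.\ free modules, lift through $Z^i(\Xb) \twoheadrightarrow H^i(\Xb)$ using projectivity, and take $\mathbf{F}^{(1)}_\bullet$ with zero differentials. The induced chain map is surjective on cohomology, with kernels $K_1^i \subseteq F^i_1$ that are first syzygies of $H^i(\Xb)$, hence $\operatorname{pd} K_1^i \leq d-1$ where $d = \dim A$. At Stage~$k+1$, for each $i \in \{0,1\}$, pick a surjection $G^{1-i}_{k+1} \twoheadrightarrow K^i_k$ from a f.g.\ free module; the composite into $X^i$ lands in $\image d_{1-i}^X$ (those classes vanish in $H^i(\Xb)$) and so lifts to a map $G^{1-i}_{k+1} \to X^{1-i}$. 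Enlarge $F^{1-i}_k$ to $F^{1-i}_{k+1} = F^{1-i}_k \oplus G^{1-i}_{k+1}$ and extend the differentials by sending $G^{1-i}_{k+1}$ onto (a lift of) $K^i_k \subseteq F^i_k$; the complex relations hold because each new summand maps into the kernel of the opposite differential. A direct calculation gives $H^i(\mathbf{F}^{(k+1)}_\bullet) \cong H^i(\Xb) \oplus L^i_{k+1}$ with $L^i_{k+1}$ a first syzygy of $K^{1-i}_k$, so the projective dimensions of the residual kernels drop strictly. After $d+1$ stages the residual vanishes, producing a quasi-isomorphism $\Fb \to \Xb$ with f.g.\ free terms.

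\textbf{Fully faithfulness and the main obstacle.} A morphism $\Pb \to \Qb$ in $\D^2_{fg}(A)$ with $\Pb, \Qb \in \K^2(\proj A)$ is represented by a roof $\Pb \xleftarrow{s} \Zb \xrightarrow{g} \Qb$ with $s$ a quasi-isomorphism. Applying (b) to $\Zb$, obtain a quasi-isomorphism $t\colon \mathbf{R}_\bullet \to \Zb$ with $\mathbf{R}_\bullet \in \K^2(\proj A)$; then $st\colon \mathbf{R}_\bullet \to \Pb$ is a quasi-isomorphism between objects of $\K^2(\proj A)$ whose cone is acyclic, hence null-homotopic by (a). Thus $st$ is invertible in $\K^2(\proj A)$ and the roof collapses to $gt \circ (st)^{-1}$; faithfulness is analogous. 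The principal obstacle is input (b): verifying that at each stage the new cohomology kernel is exactly a first syzygy of the previous one, so that projective dimension drops strictly and the iteration terminates in $\dim A + 1$ steps. Granted this, the null-homotopy lemma and the calculus-of-fractions collapse are formal consequences of finite global dimension.
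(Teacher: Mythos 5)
Your proposal is correct, and for the essential surjectivity step (input (b)) it takes a genuinely different and more direct route than the paper. The null-homotopy of acyclic complexes (input (a)) and the calculus-of-fractions collapse follow the same pattern as the paper (cf.\ Lemma \ref{acyclic}, Proposition \ref{acyclic-c}, Lemma \ref{c-left}, Theorem \ref{p-d-c}); your syzygy argument for (a) is a fine $2$-periodic adaptation of the paper's Lemma \ref{acyclic}.

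Where you diverge substantially is in producing a $2$-periodic projective resolution. The paper (Lemma \ref{bella} and Theorem \ref{res}) takes an ordinary minimal bounded-above resolution of a truncation of $\Xb$, shows the induced self-map eventually acts by isomorphisms in low degrees, passes to a colimit to get a homotopy-periodic resolution, and then invokes a separate technical Lemma \ref{mod-2} (Section 7) to straighten it into a genuinely $2$-periodic complex. You instead build the resolution inside $\C^2(\proj A)$ from the start, iteratively killing the kernel of $H^i(\Fb^{(k)}) \to H^i(\Xb)$. This bypasses Section 7 of the paper entirely, which is a real simplification. The step you flag as ``the principal obstacle'' does in fact hold. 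Writing $L^i_{k+1} = \ker\bigl(G^i_{k+1} \twoheadrightarrow K^{1-i}_k\bigr)$, one gets an exact sequence
\[
0 \to H^i(\Xb) \to H^i(\Fb^{(k+1)}) \to L^i_{k+1} \to 0,
\]
obtained from the inclusion $\Fb^{(k)} \hookrightarrow \Fb^{(k+1)}$: new boundaries in degree $i$ kill exactly the class of $K^i_k$, and new cycles in degree $i$ project onto $L^i_{k+1}$. The composite $H^i(\Xb) \to H^i(\Fb^{(k+1)}) \to H^i(\Xb)$ (inclusion of old cycles followed by the chain map to $\Xb$) is the identity because the chain map $\Fb^{(k+1)} \to \Xb$ restricts on $\Fb^{(k)}$ to the given one, so the sequence splits and $K^i_{k+1} \cong L^i_{k+1}$ as claimed. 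Two small cautions worth recording. First, the new $G^{1-i}_{k+1}$ must map onto a \emph{lift} of $K^i_k$ inside $Z^i(\Fb^{(k)})$ (your ``$K^i_k \subseteq F^i_k$'' should be read that way; $K^i_k$ itself is a subquotient). Second, ``projective dimensions drop strictly'' needs qualifying when the kernel is already free but nonzero: here pd stays at $0$, but if at each stage you take a \emph{minimal} free cover, the next kernel is $0$, so termination after at most $d+1$ stages still holds. With those points made precise, your construction gives a valid alternative proof of the paper's Theorem \ref{res}, and hence of Theorem \ref{first}.
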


\begin{remark}
  Theorem \ref{first} also implies that the natural map $\psi \colon \K^2(\proj A) \rt \D^2(\mmod(A))$ is also an equivalence. However we need Theorem \ref{first} for our applications.
\end{remark}
 In view of Theorem \ref{first} it is a natural question on when $\K^2(\proj A)$ is a Krull-Schmidt category. We prove quite generally.
 \begin{theorem}
 \label{ks} Let $(A, \m)$ be a Henselian Noetherian local ring. Then $\K^2(\proj A)$ is a Krull-Schmidt category.
 \end{theorem}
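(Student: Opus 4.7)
The plan is to reduce to minimal $2$-periodic complexes and then invoke the criterion that an additive category is Krull-Schmidt iff it has split idempotents and every endomorphism ring is semiperfect. Call $\Xb \in \K^2(\proj A)$ minimal if $\partial^i(X^i) \sub \m X^{i+1}$ for $i = 0, 1$. If some matrix entry of $\partial^0$ is a unit of $A$, a change of bases in $X^0$ and $X^1$ puts $\partial^0$ into block form $\bigl(\begin{smallmatrix} 1 & 0 \\ 0 & \partial'^0 \end{smallmatrix}\bigr)$; the relations $\partial^1 \partial^0 = 0 = \partial^0 \partial^1$ then force $\partial^1$ into the compatible shape $\bigl(\begin{smallmatrix} 0 & 0 \\ 0 & \partial'^1 \end{smallmatrix}\bigr)$, so $\Xb$ splits as the direct sum of the contractible $2$-periodic complex $A \xrightarrow{1} A$ and a complex of strictly smaller total rank. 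Iterating (and symmetrically when $\partial^1$ carries a unit entry) terminates, so every object is isomorphic in $\K^2(\proj A)$ to a minimal one.

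For any $\Xb \in \K^2(\proj A)$, the ring $R := \End_{\C^2(\proj A)}(\Xb)$ embeds in $\End_A(X^0) \oplus \End_A(X^1)$ and is therefore module-finite over $A$. Because $A$ is Henselian, every module-finite $A$-algebra decomposes as a finite product of Henselian local rings, in particular is semiperfect; so $R$ and its quotient $E := R/N = \End_{\K^2(\proj A)}(\Xb)$---with $N$ the ideal of null-homotopic chain maps---are both semiperfect. The payoff of minimality is that $N$ is small: the components $f^i = \partial^{i-1} h^i + h^{i+1} \partial^i$ of a null-homotopic $f$ land in $\m X^i$ when both differentials do, so $N \sub \m R \sub \operatorname{rad}(R)$.

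Because $R = \prod_{j} R_j$ is a product of local rings and every two-sided ideal splits as $N = \prod_j N_j$ with $N_j \sub \operatorname{rad}(R_j)$, each quotient $R_j/N_j$ is again local, so every idempotent of $E = R/N$ has the form $(\epsilon_1, \dots, \epsilon_n)$ with $\epsilon_j \in \{0, 1\}$ and lifts verbatim to a strict idempotent $e \in R$. Then $e^0, e^1$ are idempotent endomorphisms of finitely generated free modules over the local ring $A$, so each splits as $X^i = \image(e^i) \oplus \ker(e^i)$ with both summands free; the chain-map condition makes the differentials restrict to each factor, producing a decomposition $\Xb = \Yb \oplus \Zb$ in $\C^2(\proj A)$, and hence in $\K^2(\proj A)$, that realizes the given idempotent. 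Combined with the semiperfectness of $E$, this verifies both hypotheses of the Krull-Schmidt criterion. I expect the main obstacle to be the minimalization step: the block argument must juggle both $\partial^0 \partial^1 = 0$ and $\partial^1 \partial^0 = 0$ simultaneously, and without minimality $N$ need not lie in $\operatorname{rad}(R)$, so the idempotent-lifting step would collapse.
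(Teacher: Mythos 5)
Your route---minimalize, then verify directly that $\K^2(\proj A)$ has split idempotents and semiperfect endomorphism rings---differs from the paper's, which first proves $\C^2(\mmod(A))$ and $\C^2(\proj A)$ are Krull-Schmidt and then checks that each indecomposable minimal complex keeps a local endomorphism ring in $\K^2(\proj A)$. Both rest on the same two facts: the endomorphism rings are module-finite over the Henselian ring $A$, and minimality traps the homotopy ideal $N$ inside $\operatorname{rad} R$. (For $f \in N$ each component $f^i$ lies in $\m\End_A(\Xb^i)$, so $1-gf$ is a componentwise isomorphism by Nakayama for every $g$; note this gives $N \subseteq \operatorname{rad} R$ directly, whereas your intermediate claim $N \subseteq \m R$ is not obvious, since nothing forces a decomposition $f^i = \sum a_j g^i_j$ with $a_j \in \m$ to come from chain endomorphisms $(g^0_j, g^1_j)$.)

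The genuine error is in the idempotent-lifting step. A module-finite algebra over a Henselian local ring is \emph{not} in general a finite product of local rings: $M_2(A)$ is module-finite over $A$, is not local, and is not a nontrivial product, since its center $A$ has no idempotents besides $0,1$. The correct statement is only that such algebras are semiperfect, and your ensuing claim that every idempotent of $E = R/N$ is a $0$-$1$ tuple is therefore unjustified. The lifting you want is still true but needs the standard semiperfect-ring argument: lift the image of $\bar{e}$ in $R/\operatorname{rad} R$ to an idempotent $f \in R$; then $\bar{f}$ and $\bar{e}$ are idempotents of the semiperfect ring $E$ agreeing modulo $\operatorname{rad} E = \operatorname{rad} R/N$, hence are conjugate by a unit $\bar{u}$ of $E$; any preimage $u \in R$ of $\bar{u}$ is a unit because $N \subseteq \operatorname{rad} R$, and $u f u^{-1}$ is an idempotent of $R$ lifting $\bar{e}$. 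With that repair, your proof closes.
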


 A natural question for a Krull-Schmidt triangulated category is whether it has Auslander-Reiten (AR) triangles. Let $\K^2_f(\proj A)$ denote the full subcategory of $\K^2(\proj A)$ consisting of complexes with finite length cohomology. We prove
 \begin{theorem}
 \label{ar} Let $(A, \m)$ be a complete regular local ring. Then $\K^2_f(\proj A)$ has AR-triangles.
 \end{theorem}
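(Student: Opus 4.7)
The plan is to apply the Reiten--Van den Bergh criterion: a $\Hom$-finite Krull-Schmidt $k$-linear triangulated category (with $k = A/\m$) has AR-triangles if and only if it admits a Serre functor. By Theorem \ref{ks}, $\K^2(\proj A)$ is Krull-Schmidt, and this property is inherited by the full triangulated subcategory $\K^2_f(\proj A)$. What remains is to verify $\Hom$-finiteness over $k$ and to construct a Serre functor.

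First I would establish $\Hom$-finiteness. For $\Xb, \Yb \in \K^2_f(\proj A)$ the group $\Hom_{\K^2}(\Xb, \Yb)$ is a finitely presented $A$-module, arising as a quotient of a kernel inside a small diagram of finitely generated free $A$-modules. To check the length is finite, I would localize at an arbitrary non-maximal prime $\mathfrak{p}$. Since the cohomologies of $\Xb$ and $\Yb$ are of finite length (hence supported only at $\m$), the localized complexes $\Xb_\mathfrak{p}, \Yb_\mathfrak{p}$ have zero cohomology, and thus represent zero in $\D^2_{fg}(A_\mathfrak{p})$. As $A_\mathfrak{p}$ is again regular local, Theorem \ref{first} applied to $A_\mathfrak{p}$ gives that $\Xb_\mathfrak{p}$ and $\Yb_\mathfrak{p}$ are already contractible in $\K^2(\proj A_\mathfrak{p})$. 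Since $\Hom_{\K^2}$ commutes with the flat base change $A \to A_\mathfrak{p}$, this gives $\Hom_{\K^2}(\Xb, \Yb)_\mathfrak{p} = 0$. Hence $\Hom_{\K^2}(\Xb, \Yb)$ is finitely generated with support $\{\m\}$, so of finite length, in particular finite-dimensional over $k$.

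Next I would exhibit the Serre functor, whose candidate is $S := [d]$, with $d = \dim A$; in the 2-periodic category this is just $[d \bmod 2]$. The required natural isomorphism is
$$\Hom_{\K^2}(\Xb, \Yb) \cong D\,\Hom_{\K^2}(\Yb, \Xb[d]),$$
where $D(-) = \Hom_A(-, E(A/\m))$ is Matlis duality (a duality on finite length modules). The key input is Grothendieck local duality for the complete regular local ring $A$: on the ordinary $\D^b_f(A)$ the Serre functor is the shift $[d]$. Using Theorem \ref{first} to identify $\K^2(\proj A)$ with $\D^2_{fg}(A)$, I would express $\Hom_{\D^2}(\Xb, \Yb)$ as $\bigoplus_{n \in \Z} \Ext^{2n}_A(\wt{\Xb}, \wt{\Yb})$ for suitable bounded lifts $\wt{\Xb}, \wt{\Yb}$, apply the classical duality summand by summand, and re-assemble to obtain the 2-periodic Serre duality. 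Once this is in place, Reiten--Van den Bergh yields the existence of AR-triangles in $\K^2_f(\proj A)$.

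The main obstacle I expect is the construction of the Serre functor in the 2-periodic setting: although classical local duality is well known, its 2-periodic analogue requires careful handling of the sum-over-$\Z/2$-orbits identification of $\Hom_{\D^2}$ and the naturality of the resulting isomorphism in both arguments. An alternative is to build $S$ directly by composing the termwise $A$-dual $\Hom_A(-, A)$ on complexes of free modules with Matlis duality and an appropriate degree shift, thereby avoiding any passage through $\D$; but that route faces essentially the same bookkeeping issue.
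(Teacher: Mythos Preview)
Your overall strategy---invoking the Reiten--Van den Bergh criterion and exhibiting a Serre functor---is exactly the paper's, and your $\Hom$-finiteness argument via localization at non-maximal primes is correct (the paper uses the same idea in Proposition~\ref{dual-fg}). The divergence is in how the Serre functor is produced.

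Your primary route, identifying $S$ with $[d]$ by decomposing $\Hom_{\D^2}(\Xb,\Yb)$ as $\bigoplus_{n}\Ext^{2n}_A(\wt{\Xb},\wt{\Yb})$ for bounded lifts $\wt{\Xb},\wt{\Yb}$, has a real gap. That formula is precisely the statement that $\K^2_f(\proj A)$ coincides with the orbit category of the bounded finite-length derived category under $[2]$, and this is neither established in the paper nor automatic: in general the orbit category is not triangulated and need not agree with the periodic derived category (this is the subject of \cite{S2}). Even over regular $A$ you would have to produce bounded lifts of arbitrary $2$-periodic complexes and verify the $\Hom$-formula, which is a substantial detour not available from anything proved here.

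Your alternative---composing the termwise dual $(-)^* = \Hom_A(-,A)$ with Matlis duality $(-)^\vee$---is what the paper actually does. It sets $F(\Xb) = \mathbf{p}\bigl((\Xb^*)^\vee\bigr)$, where $\mathbf{p}$ is the $2$-periodic projective resolution functor from Theorem~\ref{res} (needed since $(\Xb^*)^\vee$ lands in complexes of copies of $E$), and verifies the Serre isomorphism by a short chain of identifications using the $2$-periodic $\cHom$ and $\ctensor$ of Section~4: $\Hom_\K(\Xb,\Yb)^\vee = H^0(\cHom(\Xb,\Yb))^\vee \cong H^0(\Hom_A(\Yb\ctensor\Xb^*,E)) \cong \Hom_\K(\Yb,(\Xb^*)^\vee)$. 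Density of $F$ is then checked by factoring it as a composite of three evident equivalences. This handles the bookkeeping you flagged without leaving the $2$-periodic world or invoking classical local duality. Your claim that $F\cong[d]$ is in fact true (local duality gives $(\Xb^*)^\vee\simeq\Xb[d]$ in the derived category for $\Xb$ with finite-length cohomology), but the paper neither needs nor proves this identification.
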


In the last section we explicitly compute the AR-quiver when $A$ is a complete DVR with algebraically closed residue field. It consists of two infinite tubes, see \ref{quiver}.
Thus this is an example of a triangulated category with infinitely many indecomposables but finitely many components of the AR-quiver.
In the famous book \cite[p.\ 409]{ARS} one of the conjectures is that if an artin algebra $\Gamma$ is of infinite representation type then its AR-quiver  has infinitely many components. Thus this conjecture does not hold more generally.

\begin{remark}
A natural question is whether analogues of Theorem \ref{first}, \ref{ar}  for $m$-periodic complexes. We believe that analogues results hold. However it is technically very challenging. So
in this paper we only discuss $2$-periodic complexes.
\end{remark}

We now describe in brief the contents of this paper. In section two we discuss some preliminary results on $\D(\Mod A)$ (the unbounded derived category of $A$) when $A$ is regular local.
In section three we discuss some preliminaries on $2$-periodic complexes that we need. In the next section we describe some constructions on two periodic complexes that we need. In section five we discuss minimal complexes in $\C^c(\proj A)$. In section six we discuss the Krull-Schmidt property of $\K^2(\proj A)$ when $A$ is henselian.  In the next section we prove a Lemma which is very useful to us. In section eight we show that any $2$-periodic complex $\Xb$  with finitely generated cohomology has a projective resolution $\Pb \rt \Xb$ with $\Pb \in \K^2(\proj A)$. In section nine we give a proof of Theorem \ref{first}. In section ten we discuss some preliminaries on AR-triangles and irreducible maps. In section eleven we give a proof of Theorem \ref{ar}. Finally
in the last section we explicitly compute the AR-quiver when $A$ is a complete DVR with algebraically closed residue field.
 \section{Some preliminaries on $\D(\Mod A)$ }
 Although we are primarliy interested in cyclic complexes we need a few prelimnary results on the (unbounded) derived category of modules (not necessarily finitely generated) over regular local rings.  \emph{In this section $(A,\m)$ is a regular local ring}. The results of this section are probably already known. We give a proof due to lack of a reference.

 \s
 Let $\Mod(A)$ denote the category of all $A$-modules. Let $\Proj A$ denote the category all projective $A$-modules. Note if $P$ is a projective $A$-module (not necessarily finitely generated) then it is a free $A$-module, \cite[2.5]{M}.
 Let $\C(\Mod A)$ denote the category  of  (co-chain) complexes of $A$-modules.
 We do not impose any boundedness condition on complexes in $\C(\Mod A)$.
 Let $\K(\Mod A)$ denote the homotopy category of complexes of $A$-modules and let $K(\Proj A)$  denote the homotopy categoy of projective $A$-modules. Let $\D(\Mod A)$ denote the  (unbounded) derived category of $A$-modules.

 \begin{lemma}
  \label{acyclic}
  Let $\Xb \in \K(\Proj A)$. If $\Xb$ is acyclic (i.e., $H^*(\Xb) = 0$) then $\Xb = 0$ in $\K(\Proj A)$.
 \end{lemma}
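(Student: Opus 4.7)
The plan is to show that $\Xb$ is contractible, since an object of $\K(\Proj A)$ is zero precisely when the identity on it is null-homotopic. A standard fact I will use is that a complex of projective modules is contractible if and only if it is split exact, which for an acyclic complex amounts to the cycles $Z^n(\Xb) = \ker(\partial^n)$ being projective (equivalently, direct summands of $\Xb^n$) for every $n \in \Z$. Since acyclicity is given, the real content is to produce this projectivity of all $Z^n$.

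To do so I would invoke Serre's theorem: a regular local ring $(A,\m)$ of Krull dimension $d$ has global dimension $d$, so every $A$-module (finitely generated or not) has projective dimension at most $d$. One standard way to see this extension to arbitrary modules is Auslander's lemma that for a Noetherian ring the global dimension equals the supremum of $\proj\dim A/I$ over ideals $I$; each such $A/I$ has finite projective dimension at most $d$, and this supremum bounds $\proj\dim M$ for every $M$.

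Now fix $n \in \Z$ and consider the truncation
$$\cdots \rt \Xb^{n-d-1} \rt \Xb^{n-d} \rt \cdots \rt \Xb^{n-1} \rt Z^n \rt 0.$$
Acyclicity of $\Xb$ makes this a projective resolution of $Z^n$, and its $d$-th syzygy is exactly $Z^{n-d}$. Since $\proj\dim Z^n \leq d$, the $d$-th syzygy is projective, so $Z^{n-d}$ is projective. As $n$ was arbitrary, every $Z^m$ is projective.

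With every $Z^m$ projective, each short exact sequence $0 \rt Z^m \rt \Xb^m \rt Z^{m+1} \rt 0$ splits. Assembling compatible splittings produces a contracting homotopy (equivalently, realises $\Xb$ as a direct sum of ``disk'' complexes $0 \rt Z^m \xrightarrow{\text{id}} Z^m \rt 0$, each of which is null-homotopic), so $\Xb \cong 0$ in $\K(\Proj A)$. The only subtle point is the application of finite global dimension to modules that need not be finitely generated; everything after that is formal homological algebra.
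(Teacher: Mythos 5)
Your proof is correct and follows essentially the same route as the paper: use acyclicity to identify the cycles $Z^n$ (equivalently the images $P_n = \image\partial_n$ of the paper) as $d$-th syzygies, invoke finite global dimension of the regular local ring to conclude they are projective, and then split the complex into contractible disks. The only difference is that you spell out the two points the paper leaves implicit, namely that Serre's theorem on global dimension applies to modules that need not be finitely generated (which matters since the lemma is stated for $\K(\Proj A)$, not $\K(\proj A)$) and that the splittings assemble into a contracting homotopy; both are welcome clarifications but do not change the substance.
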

\begin{proof}
 Let the differential in $\Xb$ be given by $\partial_n \colon \Xb^n \rt \Xb^{n+1}$. Set $P_n = \image \partial_n$.
 As $\Xb$ is acyclic we have that $P_n$ is a $d^{th}$-syzygy of $P_{n+d}$ (here $d = \dim A$). So $P_n$ is a projective module.
 It is then readily proved that the identity map on $\Xb$ is null-homotopic. So $\Xb = 0$ in $\K(\Proj A)$.
\end{proof}

As a consequence we obtains
\begin{corollary}\label{q-proj}
Let $f \colon \Pb \rt \Qb$ be a quism where $\Pb, \Qb \in \K(\Proj A)$. Then $f$ is an isomorphism in $\K(\Proj A)$.
\end{corollary}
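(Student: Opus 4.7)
The plan is to prove this by passing to the mapping cone and invoking Lemma \ref{acyclic}.

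First I would form the mapping cone $\cone(f)$ of $f \colon \Pb \rt \Qb$ in $\K(\Mod A)$. Recall that $\cone(f)^n = \Pb^{n+1} \oplus \Qb^n$, so since finite direct sums of projectives are projective, $\cone(f)$ lies in $\K(\Proj A)$. Second, I would use the standard long exact cohomology sequence associated with the distinguished triangle
\[
\Pb \xar \Qb \xar \cone(f) \xar \Pb[1]
\]
to conclude that $\cone(f)$ is acyclic: the connecting homomorphisms $H^n(\cone(f)) \rt H^{n+1}(\Pb)$ sit between the isomorphisms induced by the quism $f$, forcing $H^n(\cone(f)) = 0$ for all $n$.

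With these two observations in hand, Lemma \ref{acyclic} applies to $\cone(f)$ and gives $\cone(f) = 0$ in $\K(\Proj A)$. Finally, I would invoke the standard fact in any triangulated category that in a distinguished triangle $X \xar Y \xar Z \xar X[1]$ with $Z = 0$, the map $X \rt Y$ is an isomorphism (apply $\Hom(-,W)$ or $\Hom(W,-)$ and use the resulting five-term exact sequences, or simply observe that the zero object forces the connecting map to vanish and then both triangles $0 \rt X \rt Y$ and $X \rt Y \rt 0$ are distinguished). Applying this to the triangle above yields that $f$ is an isomorphism in $\K(\Proj A)$.

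There is no serious obstacle here; the only point to be slightly careful about is verifying that the cone construction in $\K(\Mod A)$ restricts to one in $\K(\Proj A)$, which is immediate from the degreewise formula, and that the notion of quism in $\K(\Mod A)$ coincides with the ambient notion so that Lemma \ref{acyclic} is directly applicable.
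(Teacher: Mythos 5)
Your proof is correct and follows exactly the same strategy as the paper: form the mapping cone, use the long exact cohomology sequence to see it is acyclic, apply Lemma \ref{acyclic} to conclude the cone vanishes in $\K(\Proj A)$, and then deduce that $f$ is an isomorphism. The paper's version is terser but identical in substance.
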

\begin{proof}
 We have a triangle $\Pb \xrightarrow{f} \Qb \rt C(f) \rt \Pb[1]$. As $f$  is a quism we get $H^*(C(f)) = 0$. So by Lemma \ref{acyclic} we get $C(f) = 0$ in $\K(\Proj A)$. Thus $f$ is an isomorphism in $\K(\Proj A)$.
\end{proof}

\begin{remark}
 Let $\Xb \in \C(\Mod A)$. Then there exists $\Pb \in \C(\Proj A)$ and a quism $p \colon \Pb \rt \Xb$, see \cite[5.1.7]{CFH}.
\end{remark}

 Next we show
 \begin{lemma}
  \label{right}
  Let  $\Pb \in \K(\Proj A)$ and let $\Xb \in \K(\Mod A)$. Assumme there is a quism $s \colon \Xb \rt \Pb$. Then there exists a quism $t \colon \Pb \rt \Xb$ such that $s\circ t = 1_{\Pb}$ in $\K(\Mod A)$.
 \end{lemma}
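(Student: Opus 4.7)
The plan is to reduce to the situation covered by Corollary \ref{q-proj}. By the remark cited in the excerpt, any complex in $\C(\Mod A)$ admits a quism from a complex of projectives. Apply this to $\Xb$ to obtain $\Qb \in \C(\Proj A)$ and a quism $p \colon \Qb \rt \Xb$.

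Now consider the composition $s \circ p \colon \Qb \rt \Pb$. Since $s$ and $p$ are both quisms, so is $s\circ p$. This composition is a quism between objects of $\K(\Proj A)$, so Corollary \ref{q-proj} gives that $s\circ p$ is an isomorphism in $\K(\Proj A)$. Let $u \colon \Pb \rt \Qb$ be its inverse, so that $(s\circ p)\circ u = 1_{\Pb}$ in $\K(\Proj A)$ (and hence in $\K(\Mod A)$ via the fully faithful inclusion). Set
$$t = p \circ u \colon \Pb \rt \Xb.$$
Then $s \circ t = s \circ p \circ u = 1_{\Pb}$ in $\K(\Mod A)$, which is the required identity. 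Moreover, $u$ is an isomorphism in $\K(\Proj A)$ and therefore a quism, and $p$ is a quism by construction; so $t = p \circ u$ is a quism, as required.

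The only subtlety is making sure one uses Corollary \ref{q-proj} rather than trying to build $t$ by hand: the natural temptation would be to attempt a direct lifting argument using the projectivity of each $\Pb^n$ together with the quism $s$, but the obstructions to making a compatible choice up to chain homotopy are exactly what the acyclicity lemma (Lemma \ref{acyclic}) removes, and that lemma is already packaged in Corollary \ref{q-proj}. Thus the only real input is the existence of the projective resolution $p$, which has been recorded in the preceding remark, and the rest is formal.
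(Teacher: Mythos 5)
Your proof is correct and is essentially identical to the paper's own argument: both pick a projective resolution $\Qb \rightarrow \Xb$, observe that the composite $\Qb \rightarrow \Pb$ is a quism between complexes of projectives hence invertible in $\K(\Proj A)$ by Corollary \ref{q-proj}, and define $t$ by composing with the inverse. The only differences are notational.
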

\begin{proof}
 Let $\eta \colon \Qb \rt \Xb$ be a quism with $\Qb \in \C(\Proj A)$. Then $g = s \circ \eta \colon \Qb \rt \Pb$ is a quism and so invertibe in $\K(\Proj A)$. Set $t = \eta  \circ g^{-1}$. The result follows.
\end{proof}

Finally we show
\begin{theorem}
 \label{p-d} Let $\Pb \in \K(\Proj A)$ and let $\Xb \in \K(\Mod A)$. Then the natural map
 \[
 \theta \colon  \Hom_{\K(\Mod A)}(\Pb, \Xb) \rt  \Hom_{\D(\Mod A)}(\Pb, \Xb)
 \]
is an isomorphism.
\end{theorem}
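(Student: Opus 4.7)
The plan is to use the Verdier quotient description of $\D(\Mod A)$ and to apply Lemma \ref{right} as a section-lifting tool. A morphism in $\Hom_{\D(\Mod A)}(\Pb, \Xb)$ is represented, via the calculus of left fractions, by a roof
\[
\Pb \xleftarrow{s} \Yb \xrightarrow{f} \Xb, \qquad s \text{ a quism in } \K(\Mod A),
\]
and two such roofs are identified in the usual way. I will prove $\theta$ is both surjective and injective by contracting every such roof against its $\Pb$-leg using Lemma \ref{right}.

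\textbf{Surjectivity.} Given a roof $\Pb \xleftarrow{s} \Yb \xrightarrow{f} \Xb$ with $s$ a quism, apply Lemma \ref{right} to the quism $s \colon \Yb \rt \Pb$: one obtains a morphism $t \colon \Pb \rt \Yb$ in $\K(\Mod A)$ with $s \circ t = 1_{\Pb}$. Then the roof is equivalent in $\D(\Mod A)$ to
\[
\Pb \xleftarrow{1_{\Pb}} \Pb \xrightarrow{f \circ t} \Xb,
\]
which is the image under $\theta$ of the morphism $f \circ t \in \Hom_{\K(\Mod A)}(\Pb, \Xb)$.

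\textbf{Injectivity.} Let $f \colon \Pb \rt \Xb$ be a morphism in $\K(\Mod A)$ with $\theta(f) = 0$. By the calculus of fractions, this means there exists a quism $s \colon \Yb \rt \Pb$ such that $f \circ s = 0$ in $\K(\Mod A)$. Applying Lemma \ref{right} once more, we get $t \colon \Pb \rt \Yb$ with $s \circ t = 1_{\Pb}$ in $\K(\Mod A)$. Composing, $f = f \circ (s \circ t) = (f \circ s) \circ t = 0$ in $\K(\Mod A)$, so $f = 0$ already in the homotopy category.

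The step that actually carries content is invoking Lemma \ref{right}, which in turn relies on Lemma \ref{acyclic} and hence crucially on the regularity of $A$ (finite global dimension, so that the image modules $P_n$ in any acyclic complex of projectives are themselves projective). The rest is purely formal Verdier-style manipulation, and I do not anticipate any real obstacle beyond being careful about the direction of the roofs used.
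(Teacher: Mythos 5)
Your proof is correct and takes essentially the same approach as the paper: both surjectivity and injectivity are reduced to Lemma \ref{right}, which produces a one-sided inverse $t$ to the quism $s$ in the roof, and the rest is the standard calculus-of-fractions manipulation. The paper's argument is word-for-word the same modulo notation, citing \cite[2.1.26]{N} for the characterization of when $\theta(f)=0$.
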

\begin{proof}
 Let $f \circ s^{-1} \colon \Pb \xleftarrow{s} \Zb \xrightarrow{f} \Xb$ be a left fraction. Note $s$ is a quism. By \ref{right} there exists a quism $t \colon  \Pb \rt \Zb$ such that $s\circ t = 1_{\Pb}$ in $\K(\Mod A)$. Then check that
  $f\circ s^{-1} = f\circ t$. Thus $\theta$ is surjective.

 Let $f \colon \Pb \rt \Xb$ be such that $\theta(f) = 0$. Then by
 \cite[2.1.26]{N} there exists a quism $s \colon \Zb \rt \Pb$ such that $f \circ s = 0$. By \ref{right} there exists a quism $t \colon  \Pb \rt \Zb$ such that $s\circ t = 1_{\Pb}$ in $\K(\Mod A)$.
 Then $f = f \circ s \circ t = 0$. So $\theta$ is injective.
\end{proof}
\section{some preliminaries on $2$-periodic complexes}
In this section $(A,\m)$ will denote a Noetherian ring. Let $\Mod(A)$ be the category of all $A$-modules. By $\mmod(A)$ we will denote the category of all finitely generated $A$-modules.

\s \label{senorita} Let $\A$ denote a full additive subcategory of $\Mod(A)$ (for most of our applications $\A$ will be $\Mod(A)$, $\mmod(A)$ or
$\proj(A)$). By an $\A$-$2$-periodic complex we mean an $\A$ (co-chain) complex $\Xb$ such that $\Xb^{2n} = \Xb^0$, $\Xb^{2n +1} = \Xb^1$ and differentials $\partial_{2n} = \partial_0$ and $\partial_{2n +1} = \partial_1$ for all $n \in \Z$.
Let $\Xb, \Yb$ be $2$-periodic $\A$-complexes. By a $2$-periodic map $\phi \colon \Xb \rt \Yb$ we mean a chain map $\phi$ with $\phi_{2n} = \phi_0$ and $\phi_{2n+1} = \phi_1$ for all $n \in \Z$.
Let $\C^2(\A)$ be the class of all $2$-periodic $\A$-complexes and $2$-periodic morphisms. Clearly $\C^2(\A)$ is an additive category. If $\A = \mmod(A)$ or $\Mod(A)$ then it is also clearly abelian. Let $\Xb$ be a $2$-periodic $\A$-complex. By $\Xb[1]$ we mean the $\A$-complex with $\Xb[1]^n = \Xb^{n+1}$ and $\partial_{\Xb[1], n} = - \partial_{\Xb, n+1}$.

\s \label{rachel} Let $\Xb, \Yb \in \C^2(\A)$ and let $\phi, \psi \colon \Xb \rt \Yb$ be $2$-periodic maps. We say $\phi, \psi$ are  $2$-homotopic if there exists maps $s_n \colon \Xb^n \rt \Yb^{n-1}$ with $s_{2n} = s_0$ and $s_{2n+1} = s_1$ for all $n \in \Z$ with $\phi - \psi = \partial_{\Yb}\circ s + s \circ \partial_{\Xb}.$ It can be easily seen that if $\phi, \psi \colon \Xb \rt \Yb$ are $2$-homotopic and if $u \colon \Pb \rt \Xb$ and $v \colon \Yb \rt \Qb$ are $2$-periodic maps then
$\phi \circ u, \psi \circ u$ (and $v \circ \phi$, $v \circ \psi$) are $2$-homotopic. So we may  form the $2$-periodic homotopy category $\K^2(\A)$.

\s Let $\Xb, \Yb$ be $2$-periodic $\A$-complexes and let $f \colon \Xb \rt \Yb$ be a $2$-periodic map.
We define $\cone(f)$ as follows: $\cone(f)^n = \Xb^{n+1} \oplus \Yb^n$ and differentials defined as
$$ \partial_n(x,y) = (-\partial_{\Xb, n+1}(x), \partial_{\Yb, n}(y) - f_{n+1}(x)).$$
Clearly
$\cone(f) \in \C^2(\A)$. We also have the usual exact sequence complexes
\[
 0 \rt \Yb \xrightarrow{u} \cone(f) \xrightarrow{v}  \Xb[1] \rt 0
 \]
where $u_n(y) = (0,y)$ and $v_n(x,y) = -x$. Clearly $u, v$ are $2$-periodic maps.

\s \emph{Triangles in $\K^2(\A)$:} By a strict triangle in $\K^2(\A)$ we mean
$$\Xb \xrightarrow{f} \Yb \xrightarrow{u} \cone(f) \xrightarrow{v} \Xb[1],$$
where $f \colon \Xb \rt \Yb$ is a $2$-periodic chain map and $u, v$ are as above. All triangles isomorphic to a strict triangle are exact triangles. With this triangulated structure a standard and routine argument yields that $\K^2(A)$ is a triangulated category.

\s Let $\T$ be the full subcategory in $\K^2(\A)$ consisting of $2$-periodic complexes $\Xb$ with $H^*(\Xb) = 0$. Then $\T$ is a thick subcategory in $\K^2(\A)$. The Verdier quotient $\D^2(\A) = \K^2(\A)/\T$ is the derived category of $2$-periodic $\A$-complexes. We also let $\K^2_{fg}(\A)$ denote the thick subcategory $\K^2(\A)$ consisting of $2$-periodic complexes $\Xb$ with $H^i(\Xb)$ finitely generated $A$-module for all $i \in \Z$. The Verdier quotient $\D^2_{fg}(\A) = \K^2_{fg}(\A)/\T$ is the derived category of $2$-periodic $\A$-complexes with finitely generated cohomology.

\section{Some construction on $2$-Periodic complexes}
In this section $A$ is a Noetherian ring.
In this section we describe some constructions that we need. For convenience we work with $\C^2(\Mod A)$, although some of the constructions hold more generally.

\s \label{dual} Let $\Xb \in \C^2(\Mod A)$ and let $M$ be an $A$-module. By $\Hom_A(\Xb, M)$ we mean a complex with
$\Hom_A(\Xb, M)^n = \Hom_A(\Xb^{-n}, M)$. The differentials are given as if $f \in \Hom_A(\Xb, M)^n $ then
$$ \partial(f) = (-1)^{n+1}f \circ \partial_{\Xb, -n-1}. $$
Clearly $\Hom_A(\Xb, M) \in \C^2(\Mod A)$. We will primarily use this construction when $M = A$ or when $M = E$, the injective hull of $A/\m$ (when $(A,\m)$ is local).

\s \label{ctensor} Let $\Xb, \Yb \in \C^2(\Mod A)$. By $\Zb = \Xb\ctensor \Yb$ (the $2$-periodic tensor of $\Xb$ with $\Yb$) we mean a complex with
\begin{enumerate}
  \item $\Zb^{2n}= \Zb^0 = (\Xb^0 \otimes_A \Yb^0) \oplus (\Xb^1 \otimes_A \Yb^1)$ for all $n \in \Z$.
  \item  $\Zb^{2n+1}= \Zb^1 = (\Xb^0 \otimes_A \Yb^1) \oplus (\Xb^{1} \otimes_A \Yb^0)$ for all $n \in \Z$.
  \item $\partial(x\otimes y) = \partial(x)\otimes y + (-1)^{|x|}x\otimes \partial(y)$.
\end{enumerate}
Here $|x| = 0 $ if $x \in \Xb^0$ and $|x| = 1 $ if $x \in \Xb^1$. It is readily verified that $\Zb$ is complex. Clearly it is circular.

\s\label{chom} Let $\Xb, \Yb \in \C^2(\Mod A)$. By $\Zb = \cHom(\Xb, \Yb)$ (the $2$-periodic Hom of $\Xb$ with $\Yb$) we mean a complex with
\begin{enumerate}
  \item $\Zb^{2n}= \Zb^0 = \Hom_A(\Xb^0, \Yb^0) \oplus \Hom_A(\Xb^1, \Yb^1)$ for all $n \in \Z$.
  \item  $\Zb^{2n+1}= \Zb^1 =  \Hom_A(\Xb^0, \Yb^1) \oplus \Hom_A(\Xb^1, \Yb^0)$  for all $n \in \Z$.
  \item $\partial(f) = \partial_{\Yb}\circ f - (-1)^{|f|}f \circ \partial_{\Xb}$.
\end{enumerate}
It is readily verified that $\Zb$ is infact a $2$-periodic complex. It is readily verified that
$$ H^0(\cHom(\Xb, \Yb)) = \Hom_{\K^2(\Mod A)}(\Xb, \Yb). $$

\s\label{hom-tensor}  Let $\Xb, \Yb \in \C^2(\Mod A)$. We have a natural map
\begin{align*}
 \delta_{\Xb, \Yb} \colon \Yb \ctensor \Hom_A(\Xb, A) &\rt \cHom(\Xb, \Yb) \\
 y\otimes \phi  &\rt     y\ctensor \phi
\end{align*}
Here $ y\ctensor \phi(x) = \phi(x)y$. It is readily verified that if $\Yb \in \C^2(\mmod(A))$ and $\Xb \in \C^2(\proj A)$ then
$\delta_{\Xb, \Yb}$ is an isomorphism.
\section{Minimal $2$-periodic complexes}
In this section we assume $(A,\m)$ is a Noetherian local ring. We say $\Xb \in \C^2(\proj A)$ is minimal if $\partial(\Xb) \subseteq \m \Xb$.
We first need a large class of complexes (whose image) in $\K^2(\proj A)$ is zero.
\begin{lemma}\label{zero-c}
\begin{enumerate}[\rm (1)]
  \item Let $\Wb$ be the complex $\Wb^i = A$ for all $i$ and $\partial^{2i+1} = 1_A$ and $\partial^{2i} = 0$ for all $i$.. Then $\Wb$ is zero in $\K^2(\proj A)$.
  \item Let $\Wb$ be the complex $\Wb^i = A$ for all $i$ and  $\partial^{2i} = 1_A$ and $\partial^{2i + 1} = 0$ for all $i$.. Then $\Wb$ is zero in $\K^2(\proj A)$.
\end{enumerate}
\end{lemma}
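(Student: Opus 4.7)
The plan is to show directly that the identity map $1_{\Wb}$ is null-homotopic in $\C^2(\proj A)$, which then immediately gives $\Wb = 0$ in the homotopy category $\K^2(\proj A)$. Because the complex has only two distinct pieces of data (the components in even and odd degree), a $2$-periodic homotopy is determined by just two maps $s^0, s^1 \colon A \to A$, and we only need to verify the homotopy equation $1 = \partial \circ s + s \circ \partial$ in two degrees.

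For part (1), where $\partial^{2i} = 0$ and $\partial^{2i+1} = 1_A$, the guess is forced: looking at the equation at an even spot $2i$, the term $s^{2i+1} \circ \partial^{2i}$ vanishes, so we need $\partial^{2i-1} \circ s^{2i} = 1$, forcing $s^0 = 1_A$. At an odd spot $2i+1$, the equation reads $s^{2i+2} \circ \partial^{2i+1} + \partial^{2i} \circ s^{2i+1} = 1$; with $s^0 = 1_A$ the first summand is already $1_A$, so we may take $s^1 = 0$. Thus $s^0 = 1_A$, $s^1 = 0$ defines the required $2$-periodic null-homotopy.

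For part (2) the situation is completely symmetric, obtained by swapping the roles of even and odd: with $\partial^{2i} = 1_A$ and $\partial^{2i+1} = 0$, set $s^0 = 0$ and $s^1 = 1_A$ and verify the two degree-checks in the same way.

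There is no real obstacle; the only thing to be careful about is bookkeeping the indices and parities correctly so that the periodicity conditions $s^{2n} = s^0$ and $s^{2n+1} = s^1$ are compatible with the homotopy identity in both even and odd degree. One could alternatively package this more conceptually by observing that $\Wb$ in (1) is isomorphic to $\cone(1_{\B})$ where $\B$ is the trivial $2$-periodic complex concentrated in degree $0$ with $\B^0 = A$, and similarly in (2), from which the nullity follows from the fact that the cone of an isomorphism is zero in any triangulated category; but the direct homotopy is shorter and self-contained.
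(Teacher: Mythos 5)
Your proof is correct and follows essentially the same approach as the paper: exhibit an explicit $2$-periodic null-homotopy of the identity. The only cosmetic difference is that the paper takes $s_i = 1_A$ in every degree (which also works, since $s_1$ is unconstrained in case (1) and $s_0$ in case (2)), whereas you take the unconstrained component to be $0$.
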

\begin{proof}
(1)  Let $s_i \colon \Wb^{i} \rt \Wb^{i-1}$ be given by $1_A$ for all $i$. This gives a $2$-periodic homotopy of $1_{\Wb}$ with $0_{\Wb}$. It follows that $\Wb$ is zero in $\K^2(\proj A)$.

(2) This is similar to (1).
\end{proof}
The main result of this section is
\begin{theorem}
 \label{red-min} Let  $\Xb \in \C^2(\proj A)$. Then $\Xb = \Yb \oplus \Zb$ where $\Yb$ is minimal and $\Zb$ is a finite direct sum of complexes of the type given in Lemma \ref{zero-c}. In particular  $\Zb  = 0$ in $\K^2(\proj A)$.
\end{theorem}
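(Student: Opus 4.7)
The plan is to induct on the total rank $r(\Xb) := \rank_A \Xb^0 + \rank_A \Xb^1$. If $\Xb$ is already minimal there is nothing to prove. Otherwise, since the matrix entries of $\partial_0$ and $\partial_1$ (relative to any chosen bases) lie in $\m$ precisely when $\Xb$ is minimal, at least one of these matrices has a unit entry. After possibly interchanging the roles of degrees $0$ and $1$ we may assume that $\partial_0 \colon \Xb^0 \to \Xb^1$ has a unit matrix entry. The plan is to split off one direct summand of the type described in Lemma \ref{zero-c}(2) and then apply induction.

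To perform the splitting, I would choose bases $e_1, \ldots, e_m$ of $\Xb^0$ and $f_1, \ldots, f_n$ of $\Xb^1$ such that the coefficient of $f_i$ in $\partial_0(e_j)$ is a unit $u$. First, replace $f_i$ by $f_i' := \partial_0(e_j)$; since the coefficient of $f_i$ in $f_i'$ is a unit, $\{f_1, \ldots, f_{i-1}, f_i', f_{i+1}, \ldots, f_n\}$ is still a basis of $\Xb^1$. Second, for each $k \neq j$, choose $\alpha_k \in A$ so that $\partial_0(e_k) - \alpha_k f_i' \in \bigoplus_{l \neq i} A f_l$, and set $e_k' := e_k - \alpha_k e_j$; then $\partial_0(e_k') \in \bigoplus_{l \neq i} A f_l$. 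After these changes one has $\partial_0(e_j) = f_i'$ and, because $\Xb$ is a complex, $\partial_1(f_i') = \partial_1\partial_0(e_j) = 0$.

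The crucial step is to verify that $\partial_1(f_k) \in \bigoplus_{l \neq j} A e_l'$ for every $k \neq i$. Writing $\partial_1(f_k) = \gamma_k e_j + \sum_{l \neq j} \delta_{kl} e_l'$ and applying $\partial_0$, the complex relation $\partial_0 \partial_1 = 0$ yields
\[
0 \;=\; \gamma_k f_i' + \sum_{l \neq j} \delta_{kl}\, \partial_0(e_l'),
\]
and since $f_i'$ is independent of the elements $\partial_0(e_l') \in \bigoplus_{r \neq i} A f_r$, this forces $\gamma_k = 0$. Consequently the subcomplex $\Zb$ with $\Zb^0 = A e_j$, $\Zb^1 = A f_i'$, $\partial_0 = 1_A$, $\partial_1 = 0$ is a direct summand of $\Xb$, and its complement $\Yb$ (spanned by $\{e_k' : k \neq j\}$ and $\{f_k : k \neq i\}$, with the restricted differentials) is a $2$-periodic complex of finitely generated free modules with $r(\Yb) = r(\Xb) - 2$. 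Since $\Zb$ is of the type in Lemma \ref{zero-c}(2), the inductive hypothesis applied to $\Yb$ finishes the decomposition, and the final assertion that $\Zb = 0$ in $\K^2(\proj A)$ is immediate from Lemma \ref{zero-c}. The only subtle point is the use of $\partial_0 \partial_1 = 0$ to kill the $\gamma_k$, which is what makes $\Zb$ a summand as a complex rather than merely as a graded module.
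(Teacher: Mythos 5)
Your proof is correct and follows essentially the same line as the paper: induction on $r(\Xb) = \rank\Xb^0 + \rank\Xb^1$, using a unit entry in one differential to change bases so that a rank-two subcomplex of the form in Lemma \ref{zero-c} splits off, with $\partial\circ\partial = 0$ invoked once to kill the backward differential on the split vector and once more to show the complement is itself a subcomplex. The paper places the unit in $\partial^{-1}$ and peels off a type (1) complex while you place it in $\partial_0$ and peel off a type (2) complex, but this is just the same WLOG.
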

\begin{proof}
We induct on $r(\Xb) = \rank \Xb^0 + \rank \Xb^1$. If $ r = 1$ then either $\Xb^0 = 0$ or $\Xb^1 = 0$. Then as $\Xb$ is $2$-periodic it follows that $\Xb$ is a minimal complex.
Now assume that $r \geq 2$ and the result is proved for all complexes $\Fb$ with $r(\Fb) < r$.
If $\Xb$ is minimal then we dont have to prove anything. So assume $\Xb$ is not minimal. Without loss of any generality we may assume $\partial^{-1}$ is not minimal. We may choose basis
$B_{-1} = \{ u_1, \ldots, u_m \}$ of $\Xb^{-1}$ and basis $B_0 = \{ v_1, \ldots, v_n \}$ of $\Xb^0$ such that $\partial^{-1}(u_1) = v_1$ and $\partial^{-1}(u_j) \in A v_2 + \cdots A v_n$ for $j \geq 2$.
Now note that $\partial^0(v_1) = \partial^0\circ \partial^{-1}(u_1) = 0$. Now for $j \geq 2$ let
$\partial^0(v_j) = a_{j}u_1 + t_j$ such that $t_j \in Au_2 + \cdots  + Au_m$. Then as $\partial^{1}(\partial^0(v_j)) = 0$ and as  $\partial^{1}(u_j) \in A v_2 + \cdots A v_n$ for $j \geq 2$
it follows that $a_j = 0$. It follows that $\Xb = \Yb \oplus \Wb$ where $\Wb$ is of the form \ref{zero-c}(1).
As $r(\Yb) < r(\Xb)$ we are done by induction.
\end{proof}

\begin{remark}
\label{min-nonzero} Let $\Xb \in \C^2(\proj A)$ be a minimal non-zero complex. Then it is not difficult to show that $\Xb \neq 0$ in $\K^2(\proj A)$.
\end{remark}

\s Let $\W$ to be the class of complexes which is isomorphic to one of the two types of complexes in \ref{zero-c}. We show that
\begin{proposition}\label{W-class}
Let $\Wb \in \W$. Then
\begin{enumerate}[\rm (1)]
\item
$\Wb[1] \in \W$.
\item
$\Wb^* = \Hom_A(\Wb, A) \in \W$.
\end{enumerate}
\end{proposition}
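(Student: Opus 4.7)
The approach is a direct computation on the two explicit complexes of Lemma \ref{zero-c}, verifying in each case that the shift and the $A$-dual land back in $\W$ after absorbing signs into an isomorphism. Concretely, I would show that both operations interchange the two types listed in \ref{zero-c}.

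For (1), recall $\Wb[1]^n=\Wb^{n+1}$ with differential $-\partial_{\Wb,n+1}$. If $\Wb$ is the type (1) complex (so $\partial^{2i+1}=1_A$, $\partial^{2i}=0$), then $\Wb[1]^n=A$ for all $n$ with $\partial_{\Wb[1]}^{2i}=-\partial_{\Wb}^{2i+1}=-1_A$ and $\partial_{\Wb[1]}^{2i+1}=-\partial_{\Wb}^{2i+2}=0$; the isomorphism $\phi\colon\Wb[1]\to\Wb'$ to the type (2) model of \ref{zero-c}, defined by $\phi^{2i}=1_A$ and $\phi^{2i+1}=-1_A$, is readily checked to be a $2$-periodic chain map. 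The symmetric calculation for $\Wb$ of type (2) produces an isomorphism to type (1). Hence $\Wb[1]\in\W$.

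For (2), I would unwind the definition in \ref{dual}: if $\Wb^i=A$ for all $i$, then $\Wb^{*,n}=\Hom_A(\Wb^{-n},A)\cong A$, and under this identification the differential $f\mapsto(-1)^{n+1}f\circ\partial_{\Wb,-n-1}$ becomes multiplication by $(-1)^{n+1}\partial_{\Wb}^{-n-1}$. For $\Wb$ of type (1) this gives $\partial_{\Wb^*}^{2i}=-\partial_{\Wb}^{-2i-1}=-1_A$ and $\partial_{\Wb^*}^{2i+1}=\partial_{\Wb}^{-2i-2}=0$, so again negating odd components yields an isomorphism to the type (2) model; the type (2) case dualizes to type (1) in the same way. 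Both parts then follow.

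The only real obstacle is the sign and index bookkeeping: keeping track of the reversal $n\mapsto -n$ in the dual together with the $(-1)^{n+1}$ factor, and checking that the proposed degreewise sign change actually commutes with the differentials on the nose (not merely up to homotopy) so as to give a genuine isomorphism in $\C^2(\proj A)$. Nothing conceptually deeper is needed.
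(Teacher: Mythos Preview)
Your proposal is correct and follows essentially the same route as the paper: for (1) you give exactly the paper's isomorphism $\phi^{2i}=1_A$, $\phi^{2i+1}=-1_A$ swapping types, and for (2) your direct computation of the dual differentials is precisely what underlies the paper's one-line observation that $\Wb^*\cong\Wb[1]$, after which it invokes (1). The only difference is organizational: the paper packages part (2) as a reduction to part (1), whereas you unfold the same calculation explicitly.
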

\begin{proof}
(1) We show that if $\Wb$ is of type (1) in \ref{zero-c} then $\Wb[1]$ is isomorphic to a complex $\Zb$ of type (2) in \ref{zero-c}. We note that $\Yb = \Wb[1] $ is the complex given by $\Yb^i = A$ for all $i$ and $\partial_Y^{2i} = -1_A$ and
$\partial_Y^{2i + 1} = 0$. Let $\Zb$ be of type (2) in \ref{zero-c}. Then $f \colon \Yb \rt \Zb$ is an isomorphism where $f_{2i} = 1_A$ and $f_{2i +1} = -1_A$.
A similar argument yields that if $\Wb$ is of type (2) in \ref{zero-c} then $\Wb[1]$ is isomorphic to a complex $\Zb$ of type (1) in \ref{zero-c}.

(2)  If $\Wb$ is of type (1) or (2) in \ref{zero-c} then it is straightforward to see that
 $\Wb^* \cong \Wb[1]$.
The result follows.
\end{proof}
\section{Krull-Schmidt property of $\K^2(\proj A)$}
In this section $(A, \m)$ is a Henselian local ring. Recall an $A$-linear additive category $\C$ is Krull-Schmidt if every object in $\C$  decomposes into a finite direct sum of objects having local endomorphism rings. In this section we prove Theorem \ref{ks}.

We first prove
\begin{theorem}
\label{c-mod}Let $(A,\m)$ be a Henselian local ring. Then $\C^2(\mmod(A))$ is Krull-Schmidt.
\end{theorem}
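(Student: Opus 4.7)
The plan is to verify the two characterizing conditions for a Krull-Schmidt category: (a) idempotents split in $\C^2(\mmod(A))$, and (b) for every $\Xb \in \C^2(\mmod(A))$ the endomorphism ring $\Lambda_{\Xb} := \End_{\C^2(\mmod(A))}(\Xb)$ is semiperfect. Together these imply that every object decomposes into a finite direct sum of objects with local endomorphism rings, which is the Krull-Schmidt property.

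For (a), a natural endomorphism of $\Xb$ is a pair $(e_0, e_1)$ of $A$-linear maps $e_i \colon \Xb^i \rt \Xb^i$ satisfying $e_1 \circ \partial_0 = \partial_0 \circ e_0$ and $e_0 \circ \partial_1 = \partial_1 \circ e_1$. If $(e_0,e_1)$ is idempotent, then each $e_i$ is an idempotent in the idempotent-complete category $\mmod(A)$, so we get decompositions $\Xb^i = e_i(\Xb^i) \oplus (1-e_i)(\Xb^i)$. A direct check using the commutation relations shows that each differential $\partial_j$ splits as a direct sum of two maps with respect to these decompositions, yielding a $2$-periodic complex decomposition $\Xb \cong e\Xb \oplus (1-e)\Xb$.

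For (b), I would exhibit $\Lambda_{\Xb}$ as an $A$-subalgebra of $\End_A(\Xb^0) \times \End_A(\Xb^1)$, namely the equalizer of the two $A$-module maps sending $(f_0,f_1)$ to $f_1 \partial_0 - \partial_0 f_0$ and $f_0 \partial_1 - \partial_1 f_1$. Since $\Xb^0, \Xb^1$ are finitely generated and $A$ is Noetherian, this equalizer is a finitely generated $A$-module, so $\Lambda_{\Xb}$ is a module-finite $A$-algebra. The standard semiperfect criterion then applies: $\Lambda_{\Xb}/\m\Lambda_{\Xb}$ is a finite-dimensional algebra over the field $A/\m$, hence Artinian, hence semiperfect; the Jacobson radical $J(\Lambda_{\Xb})$ contains $\m\Lambda_{\Xb}$ by Nakayama applied to the module-finite structure; and the Henselian hypothesis on $A$ guarantees that idempotents lift from $\Lambda_{\Xb}/\m\Lambda_{\Xb}$ to $\Lambda_{\Xb}$, which is the crucial input. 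Hence $\Lambda_{\Xb}$ is semiperfect.

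The main obstacle, such as it is, is the idempotent-lifting step in (b), which is precisely the place where Henselianness enters. This is a well-known result about module-finite algebras over Henselian local rings; either it can be cited directly or derived from the definition of Henselian via the primitive idempotents of $\Lambda_{\Xb}/\m\Lambda_{\Xb}$ lifting one at a time. Everything else is formal: the equalizer description of $\Lambda_{\Xb}$ and the compatibility of idempotent splittings with differentials are routine checks, and the passage from (a)$+$(b) to Krull-Schmidt is standard category theory.
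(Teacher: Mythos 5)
Your proof is correct, and it rests on the same key inputs as the paper's: (i) $\C^2(\mmod A)$ is idempotent-complete, (ii) every $\End_\C(\Xb)$ is module-finite over $A$ because it embeds in $\End_A(\Xb^0)\times\End_A(\Xb^1)$, and (iii) Henselianness lets idempotents lift modulo $\m$ for module-finite $A$-algebras. The packaging differs slightly: the paper first runs an induction on $\mu(\Xb) = \mu(\Xb^0)+\mu(\Xb^1)$ to get a finite decomposition into indecomposables, then shows directly that an indecomposable with no nontrivial idempotents has $\End/J$ a division ring, hence local; you instead invoke the general criterion that an idempotent-complete category with semiperfect endomorphism rings is Krull-Schmidt, which subsumes the induction step. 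Both are standard, and yours is arguably the cleaner formulation since the "finite decomposition" part of Krull-Schmidt falls out for free from semiperfectness rather than needing a separate induction. One small thing to be explicit about: you lift idempotents from $\Lambda/\m\Lambda$ to $\Lambda$ via Henselianness, and since $\m\Lambda\subseteq J(\Lambda)$ and $\Lambda/\m\Lambda$ is Artinian (hence semiperfect in its own right), idempotents also lift from $\Lambda/J(\Lambda)$ up through $\Lambda/\m\Lambda$ to $\Lambda$ — that two-stage lifting is what the definition of semiperfect actually requires, and your sketch correctly hints at it but should state it.
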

\begin{proof}
Set $\C =  \C^2(\mmod(A))$.
It is clear that $\C$  is an abelian category.  So it has split idempotents.

If $M$ is a finitely generated $A$-module then let $\mu(M) = \dim_{A/\m} M/\m M$ be its minimal number of generators. If $\Xb \in  \C$  then set $\mu(\Xb) = \mu(\Xb^1) + \mu(\Xb^0)$. An easy induction on $\mu(\Xb)$  yields that $\Xb$ decomposes into a finite direct sum of indecomposables.

Let $\Xb, \Yb  \in \C$. Then we have an inclusion of $A$-modules
\[
\Hom_{ \C}(\Xb, \Yb)  \subseteq \Hom_A(\Xb^0, \Yb^0)\times  \Hom_A(\Xb^1, \Yb^1).
\]
In particular $\Hom_{ \C}(\Xb, \Yb)$ is a finitely generated $A$-module.

Let $\Xb \in \C$ be indecomposable. As idempotents split in $\C$, the  finitely generated $A$-algebra $\End_\C(\Xb)$ has no idempotents, see \cite[1.1]{LW}. Let $J$ be the Jacobson radical of $\End_\C(\Xb)$. We note that  $\End_\C(\Xb)/J$ is  semi-simple,
see \cite[20.6]{L}.
As $A$ is Henselian it follows that idempotents in  $\End_\C(\Xb)/J$ can be lifted to  $\End_\C(\Xb)$. But $\End_\C(\Xb)$ has no idempotents. It follows that $\End_\C(\Xb)/J$ is a divison ring. So $\End_\C(\Xb)$ is local.  Thus $\C$ is a Krull-Schmidt category.
\end{proof}
We next show
\begin{lemma}
(with hypotheses as in \ref{c-mod}) $\C^2(\proj A)$ is a Krull-Schmidt category.
\end{lemma}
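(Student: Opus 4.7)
The plan is to deduce the result directly from Theorem \ref{c-mod} by observing that $\C^2(\proj A)$ is a full subcategory of $\C^2(\mmod(A))$ which is closed under taking direct summands. Once this is established, the Krull-Schmidt decomposition of any $\Xb \in \C^2(\proj A)$ inside the larger category $\C^2(\mmod(A))$ will automatically take place inside $\C^2(\proj A)$, and the endomorphism rings of the summands will be unchanged (hence still local).

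First I would set up notation. Let $\Xb \in \C^2(\proj A)$, so $\Xb^0$ and $\Xb^1$ are finitely generated free $A$-modules. Viewing $\Xb$ as an object of $\C^2(\mmod(A))$, Theorem \ref{c-mod} gives a decomposition
\[
\Xb = \Yb_1 \oplus \cdots \oplus \Yb_n
\]
in $\C^2(\mmod(A))$, with each $\End_{\C^2(\mmod A)}(\Yb_i)$ local.

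The key step is to show each $\Yb_i$ actually lies in $\C^2(\proj A)$. For this, note that the direct sum decomposition above yields, degree by degree, $\Xb^j = \Yb_1^j \oplus \cdots \oplus \Yb_n^j$ as $A$-modules (for $j = 0, 1$). Since $\Xb^j$ is a finitely generated free $A$-module and each $\Yb_i^j$ is an $A$-module direct summand of $\Xb^j$, each $\Yb_i^j$ is a finitely generated projective $A$-module, hence free because $A$ is local. Therefore $\Yb_i \in \C^2(\proj A)$ for every $i$.

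Finally, because $\C^2(\proj A)$ is a full subcategory of $\C^2(\mmod(A))$, we have $\End_{\C^2(\proj A)}(\Yb_i) = \End_{\C^2(\mmod A)}(\Yb_i)$, which is local. Thus $\Xb$ decomposes in $\C^2(\proj A)$ as a finite direct sum of objects with local endomorphism rings, which is exactly the Krull-Schmidt property. I do not expect any serious obstacle here; the only point requiring attention is the observation that direct summands of free modules over a (Henselian) local ring are again free, which is immediate in the finitely generated setting.
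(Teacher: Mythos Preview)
Your proposal is correct and matches the paper's approach exactly: the paper's one-line proof simply states that if $\Xb \in \C^2(\proj A)$ and $\Xb = \Yb \oplus \Zb$ in $\C^2(\mmod(A))$ then $\Yb, \Zb \in \C^2(\proj A)$, which is precisely the closure-under-summands observation you spell out in more detail.
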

\begin{proof}
This follows from the fact that if $\Xb \in \C^2(\proj A)$ and $\Xb = \Yb \oplus \Zb \in \C^2(\mmod(A))$ then $\Yb, \Zb \in \C^2(\proj A)$.
\end{proof}
Next we give
\begin{proof}[Proof of Theorem \ref{ks}]
Let $\Xb \in \C^2(\proj A)$. Then by \ref{red-min} $\Xb = \Yb \oplus \Zb$ where $\Yb$ is minimal and $\Zb = 0$ in $\K = \K^2(\proj A)$. We note that direct summand of a minimal complex is minimal. Furthermore if $\Wb$ is a minimal complex then $\Wb \neq 0$ in $\K$.
Thus we can decompose any complex $\Xb$ in $\K$ as a finite direct sum of minimal complexes each of which is indecomposable in $\C^2(\proj A)$. Thus it is sufficient to prove the following assertion:

If $\Xb$ is an indecomposable  minimal complex in $\C^2(\proj A)$ then it is indecomposable in $\K$.

We note that $$ \Hom_\K(\Xb , \Xb) = \Hom_{\C^2(\proj A)}(\Xb , \Xb)/I. $$
where $I$ is the two sided ideal consisting of $2$-periodic null-homotopic maps from \\ $\Xb$ to $\Xb$.
We show that $I$ is contained in the Jacobson radical of \\ $\Hom_{\C^2(\proj A)}(\Xb , \Xb)$. This will prove the result.

Let $f \in I$. We want to show thar $1-gf$ is invertible for any \\ $g \in \Hom_{\C^2(\proj A)}(\Xb , \Xb)$. Set $h = gf \in I$. Then as $\Xb$ is minimal we get that the induced map
$1-h \colon \Xb^i/\m \Xb^i \rt \Xb^i/\m \Xb^i$ is the identity for all $i$. By Nakayama lemma it follows that $1-h \colon \Xb^i \rt \Xb^i$ is surjective (and hence an isomorphism) for all $i$. Thus $1-h$ is invertible in $\C^2(\proj A)$.
\end{proof}
Finally we show
\begin{proposition}\label{ks-f}
Let $(A,\m)$ be a Henselian local ring. Then $\K^2_f(\proj(A))$ is Krull-Schmidt.
\end{proposition}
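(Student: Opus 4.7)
The plan is to deduce Proposition \ref{ks-f} directly from Theorem \ref{ks}, which has already shown that the ambient category $\K^2(\proj A)$ is Krull-Schmidt. Since $\K^2_f(\proj A)$ sits inside $\K^2(\proj A)$ as a full additive subcategory, endomorphism rings computed in the subcategory coincide with those computed in the ambient category. Consequently, the local-endomorphism-ring half of the Krull-Schmidt condition is automatic once we locate the indecomposables inside $\K^2_f(\proj A)$. The only real point to verify is that $\K^2_f(\proj A)$ is closed under direct summands (in the homotopy-category sense) taken in $\K^2(\proj A)$.

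To carry this out, I would take an arbitrary $\Xb \in \K^2_f(\proj A)$ and apply Theorem \ref{ks} to obtain a decomposition $\Xb \cong \Yb_1 \oplus \cdots \oplus \Yb_n$ in $\K^2(\proj A)$ with each $\Yb_i$ indecomposable and having local endomorphism ring in $\K^2(\proj A)$. Since each cohomology functor $H^j \colon \K^2(\proj A) \rt \Mod(A)$ is additive, it respects finite direct sums, so $H^j(\Xb) \cong \bigoplus_{i=1}^n H^j(\Yb_i)$ for every $j$. Because $H^j(\Xb)$ has finite length and finite length is inherited by direct summands of finitely generated $A$-modules, each $H^j(\Yb_i)$ has finite length. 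Hence every $\Yb_i$ lies in $\K^2_f(\proj A)$, and by the fullness remark its endomorphism ring in $\K^2_f(\proj A)$ is the same local ring as in $\K^2(\proj A)$.

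There is essentially no technical obstacle: the heavy lifting occurs in Theorem \ref{ks} (via the reduction to minimal complexes in Theorem \ref{red-min} and the Henselian argument used to show $\C^2(\mmod(A))$ is Krull-Schmidt). The one conceptual point worth spelling out carefully is that passing the Krull-Schmidt property from a category to a full subcategory requires closure under finite direct summands, and in the present situation this closure is supplied for free by the additivity of $H^j$ together with the fact that finite length is a summand-closed property in $\mmod(A)$.
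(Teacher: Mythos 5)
Your proposal is correct and matches the paper's argument exactly: the paper's one-line proof invokes precisely the fact that $\K^2_f(\proj A)$ is closed under direct summands taken in $\K^2(\proj A)$, which you justify by the additivity of the cohomology functors and the fact that finite length passes to direct summands. You have simply spelled out the details the paper leaves implicit.
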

\begin{proof}
This follows from the fact that if $\Xb \in \K^2_f(\proj A)$ and $\Xb = \Yb \oplus \Zb \in \K^2(\proj A)$ then $\Yb, \Zb \in \K^2_f(\proj A)$.
\end{proof}
\section{A lemma}
In this section $(A,\m)$ is a Noetherian local ring.
Let $\K(\proj A)$ denote the homotopy category (possibly unbounded) complexes of finitely generated free $A$-modules. Recall a complex $\Xb \in \K(\proj A)$ is said to be minimal if
$\partial(\Xb) \subseteq \m \Xb$. The goal of this section is to prove the following result:
\begin{lemma}
    \label{mod-2} Assume  $\Fb \in \K(\proj A)$ is a minimal complex. Suppose there exists an isomorphism $\phi \colon \Fb[-2] \rt \Fb$ in $\K(\proj A)$. Then there exists a minimal $2$-periodic complex $\Xb \in \K(\proj A)$  and an isomorphism  $f \colon \Xb \rt \Fb$ in $\C(\proj A)$.
\end{lemma}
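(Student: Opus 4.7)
The plan is to exploit the rigidity that minimality imposes on homotopy equivalences, and then transport the complex structure of $\Fb$ onto an honestly $2$-periodic underlying graded module. First I would record a standard auxiliary fact: any chain map $\tilde\phi \colon \Pb \rt \Qb$ between minimal complexes of finitely generated free modules that is a homotopy equivalence is already an isomorphism in $\C(\proj A)$. Indeed, if $\psi$ is a homotopy inverse, the relation $\psi_n \tilde\phi_n = 1 + ds + sd$ reduces modulo $\m$ to the identity (both differentials land in $\m$), so $\tilde\phi_n$ becomes an isomorphism on $\Pb^n/\m\Pb^n$; Nakayama then upgrades each $\tilde\phi_n$ to an isomorphism of finitely generated free modules. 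Since $\Fb[-2]$ is minimal whenever $\Fb$ is, this lets me choose a representative $\tilde\phi \colon \Fb[-2] \rt \Fb$ of $\phi$ that is a chain isomorphism in $\C(\proj A)$.

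Write $\alpha_n = \tilde\phi^n \colon \Fb^{n-2} \rt \Fb^n$; each $\alpha_n$ is an isomorphism, and the chain-map condition (the $[-2]$-shift introduces no sign) reads
\[
\partial^{n+2}_\Fb \alpha_{n+2} = \alpha_{n+3} \partial^n_\Fb \quad \text{for all } n \in \Z,
\]
which I will refer to as $(\star)$. Using the $\alpha_n$ and their inverses, I then define, inductively in both directions, isomorphisms $g_n$ from $\Fb^{\bar n}$ to $\Fb^n$ (where $\bar n \in \{0,1\}$ is the parity of $n$) by $g_0 = 1_{\Fb^0}$, $g_1 = 1_{\Fb^1}$ and $g_{n+2} = \alpha_{n+2} g_n$, so that $g_{n+2} g_n^{-1} = \alpha_{n+2}$ for every $n \in \Z$.

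Next I define $\Xb$ by $\Xb^{2n} = \Fb^0$ and $\Xb^{2n+1} = \Fb^1$, with differential $\partial^n_\Xb = g_{n+1}^{-1} \partial^n_\Fb g_n$, and set $f_n = g_n$. By construction $f \colon \Xb \rt \Fb$ is a chain map and a termwise isomorphism, hence an isomorphism in $\C(\proj A)$; that $\Xb$ is a complex is automatic from the transported formula, and $\Xb$ is minimal since $g_{n+1}^{-1}$ carries $\m\Fb^{n+1}$ into $\m\Xb^{n+1}$. The only nontrivial point is the $2$-periodicity $\partial^{n+2}_\Xb = \partial^n_\Xb$, which, after substituting $g_{n+2} g_n^{-1} = \alpha_{n+2}$ and $g_{n+3} g_{n+1}^{-1} = \alpha_{n+3}$, unwinds to $\alpha_{n+3}^{-1} \partial^{n+2}_\Fb \alpha_{n+2} = \partial^n_\Fb$, i.e.\ exactly $(\star)$.

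The only step with any real content is the upgrade of $\phi$ from a homotopy equivalence to a chain isomorphism, and this is a routine Nakayama argument of the same flavour as the one already invoked in the proof of Theorem \ref{ks}. Once $\tilde\phi$ is in hand, everything else is bookkeeping dictated entirely by the chain-map identity $(\star)$.
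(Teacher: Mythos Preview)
Your proof is correct and follows the same strategy as the paper: upgrade $\phi$ to a componentwise isomorphism via minimality (the paper's Proposition~\ref{min-iso}), then use iterated compositions of the $\phi_n$ and their inverses to build the isomorphism $f$. The only difference is organizational: the paper fixes the periodic differential of $\Xb$ explicitly and then verifies the chain-map condition for $f$ by a lengthy case-by-case induction, whereas your conjugation formula $\partial_{\Xb}^n = g_{n+1}^{-1}\,\partial_{\Fb}^n\, g_n$ makes the chain-map property automatic and reduces periodicity to the one-line check of $(\star)$.
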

We will need the following easily proved result:
\begin{proposition}
  \label{min-iso} Let $\Xb, \Yb \in \K(\proj A)$ be minimal complexes and let $\phi \colon \Xb \rt \Yb$ be an isomorphism in $\K(\proj A)$. Then $\phi_i \colon \Xb^i \rt \Yb^i$ are isomorphism's for all $i \in \Z$. So $\phi$ is an isomorphism in $\C(\proj A)$.
\end{proposition}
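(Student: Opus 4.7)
The plan is to exploit minimality to argue that the homotopy relations collapse after reduction modulo $\m$, and then lift the resulting isomorphism of residue-field complexes back to $\C(\proj A)$ via Nakayama's lemma.

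Concretely, since $\phi$ is an isomorphism in $\K(\proj A)$, there exists a $2$-periodic (actually ordinary) chain map $\psi \colon \Yb \rt \Xb$ together with homotopies $s \colon \Xb^\bullet \rt \Xb^{\bullet-1}$ and $t \colon \Yb^\bullet \rt \Yb^{\bullet-1}$ satisfying
\[
\psi\circ\phi - 1_{\Xb} = \partial_{\Xb}\circ s + s\circ \partial_{\Xb}, \qquad \phi\circ\psi - 1_{\Yb} = \partial_{\Yb}\circ t + t\circ \partial_{\Yb}.
\]
The first step is to tensor these equalities with $A/\m$. Because $\Xb$ and $\Yb$ are minimal, the differentials $\partial_{\Xb}$ and $\partial_{\Yb}$ vanish after this reduction, so the right-hand sides become zero. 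Hence $\overline{\psi}\circ\overline{\phi} = 1_{\overline{\Xb}}$ and $\overline{\phi}\circ\overline{\psi} = 1_{\overline{\Yb}}$ in the category of graded $A/\m$-vector spaces, and in particular $\overline{\phi_i}$ is a $k$-linear isomorphism for every $i$.

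The second step is to promote this to an isomorphism over $A$ degree by degree. For each $i$ the composition $\psi_i\circ\phi_i \colon \Xb^i \rt \Xb^i$ is an endomorphism of a finitely generated free $A$-module whose reduction mod $\m$ is the identity; by Nakayama's lemma such an endomorphism is surjective and, being an endomorphism of a finitely generated free module over a commutative ring, is actually an automorphism. The same reasoning applied to $\phi_i\circ\psi_i$ shows it too is an automorphism of $\Yb^i$. Therefore $\phi_i$ admits both a left inverse (from $\psi_i\circ\phi_i$) and a right inverse (from $\phi_i\circ\psi_i$), so $\phi_i$ itself is an isomorphism in $\mmod(A)$. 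Since this holds for every $i$, $\phi$ is an isomorphism in $\C(\proj A)$.

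The only mildly delicate point is the passage from \emph{$\psi_i\phi_i$ is an iso mod $\m$} to \emph{$\psi_i\phi_i$ is an iso over $A$}; this is handled cleanly by Nakayama applied to a surjective endomorphism of a finitely generated module, and is the one place where we use that the $\Xb^i$ are finitely generated free. No further subtlety arises, and the proposition follows.
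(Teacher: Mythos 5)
Your proof is correct, and it is the standard argument that the paper surely has in mind when it calls the proposition ``easily proved'' without supplying a proof: minimality forces the differentials to die modulo $\m$, so the homotopies contribute nothing after reduction, and Nakayama lifts the resulting degreewise isomorphism over $k$ back to $A$. The one delicate step you flag (upgrading surjectivity of $\psi_i\phi_i$ to bijectivity) is handled correctly via the fact that a surjective endomorphism of a finitely generated module over a commutative ring is an automorphism; alternatively one can observe directly that $\bar\phi_i$ being a $k$-isomorphism forces $\rank \Xb^i = \rank \Yb^i$, so $\phi_i$ is a surjection between free modules of equal finite rank and hence an isomorphism, which avoids invoking that lemma.
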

We now give
\begin{proof}[Proof of Lemma \ref{mod-2}]
Let $\alpha^n \colon \Fb^n \rt \Fb^{n+1}$ be the differential of $\Fb$. By \ref{min-iso} we get $\phi_n \colon \Fb^{n} \rt \Fb^{n+2}$ is an isomorphism for all $n \in \Z$.
As $\phi$  is a chain map we also have
\begin{equation*}
  \phi_i\circ \alpha^{i-1} = \alpha^{i+1} \circ  \phi_{i-1} \quad \text{for all $i \in \Z$}. \tag{*}
\end{equation*}
As $\phi_i$ are isomorphisms for all $i$ we also get from (*)
\begin{equation*}
  \alpha^{i-1} \circ \phi_{i-1}^{-1} = \phi_i^{-1} \circ \alpha^{i+1} \quad \text{for all $i \in \Z$}. \tag{**}
\end{equation*}

We first construct $\Xb$. Set $\Xb^{2n} = \Fb^2$ and $\Xb^{2n + 1} = \Fb^1$ for all $n \in \Z$. The differentials on $\Xb$ are defined as follows: set $\partial^{2n+1} = \alpha^1$ and $\partial^{2n} = \alpha^0 \circ \phi_0^{-1}$ for all $n \in \Z$.
We show $\Xb$ is indeed a complex. We note
$$\partial^{2n+1} \circ \partial^{2n} = \alpha^1\circ (\alpha^0 \circ \phi_0^{-1}) = (\alpha^1 \circ \alpha^0) \circ \phi_0^{-1} = 0. $$
Next we have
$$ \partial^{2n} \circ \partial^{2n-1} = (\alpha^0 \circ \phi_0^{-1}) \circ \alpha^1 =  (\phi_1^{-1} \circ \alpha^2)\circ \alpha^1 = \phi_1^{-1} \circ ( \alpha^2\circ \alpha^1) = 0.$$
Here the second equality follows from (**) with $i = 1$. Thus $\Xb$ is indeed a complex. By construction $\Xb$ is a $2$-periodic complex. Also note that $\Xb$ is a  minimal complex.

We construct a chain map $f \colon \Xb \rt \Fb$. Define
\begin{enumerate}
  \item $f_0 = \phi_0^{-1}$, $f_1 = 1_{\Fb^1}$ and $f_2 = 1_{\Fb^{2}}$.
  \item For $n \geq 1$ define  $f_{2n +1} \colon \Xb^{2n+1} \rt \Fb^{2n+1}$ as $f_{2n+1} = \phi_{2n-1} \circ \phi_{2n-3} \circ \cdots \circ \phi_1$.
  \item For $n \geq 2$ define $f_{2n} \colon \Xb^{2n} \rt \Fb^{2n}$ as $f_{2n} = \phi_{2n-2} \circ \phi_{2n-4} \circ  \cdots \circ \phi_2$.
  \item For $n \leq 0$ define $f_{2n -1} \colon \Xb^{2n-1} \rt \Fb^{2n-1}$ as $f_{2n-1} = \phi_{2n-1}^{-1}\circ \phi_{2n + 1}^{-1} \circ \cdots \circ \phi_{-1}^{-1}$.
  \item For $n \leq -1$ define $f_{2n} \colon \Xb^{2n} \rt \Fb^{2n}$ as $f_{2n} = \phi_{2n}^{-1} \circ \phi_{2n +2}^{-1} \circ  \cdots \circ \phi_{0}^{-1}$.
\end{enumerate}
We note that $f_n$ are all isomorphisms. Thus it suffices to show $f$ is a chain map.
We have to show
\begin{equation*}
  \alpha^{n-1} \circ f_{n-1} = f_n \circ \partial^{n-1} \quad \text{for all $n \in \Z$}. \tag{$\dagger$}
\end{equation*}
Clearly $(\dagger)$ holds for $n = 1, 2$.
We show it holds for all $n \geq 3$ by induction on $n$.\\
For $n = 3$ we note that
\begin{align*}
  f_3 \circ \partial^2  &= \phi_1 \circ (\alpha^0 \circ \phi_0^{-1}) \\
   &= (\phi_1 \circ \alpha^0) \circ \phi_0^{-1}  \\
  &= (\alpha^2 \circ \phi_0) \circ \phi_0^{-1}, \quad \text{from (*) with $i = 1$} \\
   &= \alpha^2 \\
   & = \alpha^2 \circ f_2.
\end{align*}
For $n = 4$ we note that
\begin{align*}
  f_4\circ \partial^3 &= \phi_2 \circ \alpha^1 \\
  &= \alpha^3 \circ \phi_1, \quad \text{from (*) with $i = 2$} \\
   &= \alpha^3 \circ f_3.
\end{align*}
We now assume the  result for all $n $ with $1 \leq n\leq n_0$ (with $n_0 \geq 4$) and prove the result for $n_0 + 1$. We have to consider the cases $n_0+1$ is even or odd separately.
We first consider the case when $n_0 + 1 = 2m$. We have
\begin{align*}
  f_{2m} \circ \partial^{2m-1} &= (\phi_{2m-2} \circ \phi_{2m-4} \circ \cdots \circ \phi_0) \circ \alpha_1 \\
   &= \phi_{2m-2} \circ (\phi_{2m-4} \circ \cdots \circ \phi_0 \circ \alpha_1) \\
   &= \phi_{2m-2} \circ ( \alpha^{2m-3} \circ \phi_{2m-5} \circ \cdots \circ \phi_1),\quad \text{by induction hypothesis} \\
  &= (\phi_{2m-2} \circ  \alpha^{2m-3}) \circ (\phi_{2m-5} \circ \cdots \circ \phi_1) \\
  &= (\alpha^{2m-1} \circ \phi_{2m-3}) \circ (\phi_{2m-5} \circ \cdots \circ \phi_1) \quad \text{from (*) with $i = 2m-2$} \\
  &= \alpha^{2m-1} \circ f_{2m-1}.
\end{align*}
Next we consider the case when $n_0 + 1 = 2m + 1$.
We have
\begin{align*}
  f_{2m+1} \circ \partial^{2m} &= ( \phi_{2m-1} \circ \phi_{2m-3}\circ \phi_{2m-5} \circ \cdots \circ \phi_1) \circ \alpha^0 \circ \phi_0^{-1} \ \\
   &= \phi_{2m-1} \circ (\phi_{2m-3} \circ \phi_{2m-5} \circ \cdots \circ \phi_1 \circ \alpha^0 \circ \phi_0^{-1})\\
  &=  \phi_{2m-1} \circ (\alpha^{2m-2} \circ \phi_{2m-4} \circ \cdots \circ \phi_0 ),\quad \text{by induction hypothesis} \\
   &= (\phi_{2m-1} \circ \alpha^{2m-2}) \circ \phi_{2m-4} \circ \cdots \circ \phi_0 \\
  &=  (\alpha^{2m} \circ \phi_{2m-2}) \circ \phi_{2m-4} \circ \cdots \circ \phi_0  \quad \text{from (*) with $i = 2m-1$} \\
  &= \alpha^{2m} \circ f_{2m}.
\end{align*}
Thus $(\dagger)$ holds for $n \geq 1$. We show $(\dagger)$ holds for $n \leq 0$ by induction on $n$.
For $n = 0$ we have
\begin{align*}
  f_0 \circ \partial^{-1} &= \phi_0^{-1} \circ \alpha^1 \\
   &= \alpha^{-1} \circ \phi_{-1}^{-1}, \quad \text{from (**) with $i = 0$},  \\
   &= \alpha^{-1} \circ f_{-1}.
\end{align*}
For $n = -1$ we have
\begin{align*}
  f_{-1}\circ \partial^{-2} &= \phi_{-1}^{-1} \circ (\alpha_0 \circ \phi_0^{-1}) \\
  &= (\phi_{-1}^{-1} \circ \alpha_0) \circ \phi_0^{-1}  \\
  &= (\alpha^{-2} \circ \phi_{-2}^{-1}) \circ \phi_0^{-1} \quad \text{from (**) with $i = -1$}. \\
   &= \alpha^{-2} \circ f_{-2}.
\end{align*}
We now assume that $(\dagger)$ holds for all $n \geq n_0$ where $-2 \geq n_0$ and prove the result for $n_0 - 1$. We have to consider the cases $n_0 - 1$ is even or odd separately.
We first consider the case when $n_0 - 1 = 2m$. We have
\begin{align*}
  f_{2m + 1} \circ \partial^{2m} &=  (\phi_{2m+1}^{-1} \circ \phi_{2m+3}^{-1} \circ \cdots \circ \phi_{-1}^{-1}) \circ \alpha^0 \circ \phi_0^{-1} \\
  &= \phi_{2m+1}^{-1} \circ (\phi_{2m+3}^{-1} \circ \cdots \circ \phi_{-1}^{-1} \circ \alpha^0 \circ \phi_0^{-1}) \\
  &=  \phi_{2m+1}^{-1} \circ ( \alpha^{2m+2} \circ \phi_{2m +2}^{-1} \circ \cdots \circ \phi_0^{-1}), \quad \text{by induction hypotheses}\\
  &= (\phi_{2m+1}^{-1} \circ  \alpha^{2m+2}) \circ \phi_{2m +2}^{-1} \circ \cdots \circ \phi_0^{-1}\\
  &= (\alpha^{2m} \circ \phi_{2m}^{-1}) \circ  \phi_{2m +2}^{-1} \circ \cdots \circ \phi_0^{-1} \quad \text{from (**) with $i = 2m +1$}. \\
  &= \alpha^{2m} \circ f_{2m}.
\end{align*}
Next we consider the case when $n_0 - 1 = 2m -1$. We have
\begin{align*}
  f_{2m} \circ \partial^{2m-1} &= (\phi_{2m}^{-1} \circ  \phi_{2m +2}^{-1} \circ \cdots \circ \phi_0^{-1})\circ \alpha^{1}   \\
   &=  \phi_{2m}^{-1} \circ ( \phi_{2m +2}^{-1} \circ \cdots \circ \phi_0^{-1}\circ \alpha^{1}) \\
  &= \phi_{2m}^{-1} \circ( \alpha^{2m+1} \circ \phi_{2m+1}^{-1} \circ \phi_{2m+3}^{-1} \circ \cdots \circ \phi_{-1}^{-1}),  \  \text{by assumption}\\
  &=  (\phi_{2m}^{-1} \circ \alpha^{2m+1} )\circ \phi_{2m+1}^{-1} \circ \phi_{2m+3}^{-1} \circ \cdots \circ \phi_{-1}^{-1} \\
   &= \alpha^{2m-1} \circ \phi_{2m-1}^{-1} \circ \phi_{2m+1}^{-1} \circ \phi_{2m+3}^{-1} \circ \cdots \circ \phi_{-1}^{-1} \quad \text{by (**) with $i = 2m$} \\
  &= \alpha^{2m-1}\circ f_{2m-1}.
\end{align*}
Thus we have shown $(\dagger)$ for all $n\in \Z$. The result follows.
\end{proof}
\section{projective resolutions}
In this section we prove
\begin{theorem}\label{res}
Let $(A,\m)$ be a regular local ring.
Let $\Xb$ be a $2$-periodic complex with finitely generated cohomology. Then there exists $\Pb \in \C^2(\proj A)$ and a $2$-periodic quism $\phi \colon \Pb \rt \Xb$ in $\C^2(\Mod A)$.
\end{theorem}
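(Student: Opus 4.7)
The plan is to reduce the problem to the existence of an ordinary minimal projective resolution of $\Xb$, then use the $2$-periodicity of $\Xb$ together with Lemma \ref{mod-2} to fold it into a $2$-periodic complex, and finally transport the resolution map to a $2$-periodic one. First I would produce an ordinary quism $\pi \colon \Qb \rt \Xb$ with $\Qb \in \C(\proj A)$ minimal. Existence of a $K$-projective resolution of $\Xb$ in $\K(\Proj A)$ follows from the techniques of Section 2, and a non-periodic analogue of Theorem \ref{red-min} then splits off trivial summands to produce a minimal model in $\K(\Proj A)$ still quism to $\Xb$. The crucial point is that this minimal model has finitely generated terms in every degree: by minimality, $\rank_{A/\m}(\Qb^n / \m \Qb^n) = \dim_{A/\m} H^n(\Qb \otimes_A A/\m) = \dim_{A/\m} H^n(\Xb \otimes^{\mathbf{L}}_A A/\m)$, and the Tor spectral sequence $E_2^{p,q} = \operatorname{Tor}_{-p}^A(H^q(\Xb), A/\m) \Rightarrow H^{p+q}(\Xb \otimes^{\mathbf{L}}_A A/\m)$ has only finitely many nonzero entries (since $A$ has finite global dimension $d$) each of which is finite-dimensional (since $H^q(\Xb)$ is finitely generated).

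Next, since $\Xb = \Xb[-2]$ on the nose, both $\Qb$ and $\Qb[-2]$ are minimal projective resolutions of $\Xb$. By uniqueness of projective resolutions in $\K(\proj A)$ together with Proposition \ref{min-iso}, there is a chain isomorphism $\eta \colon \Qb[-2] \rt \Qb$ in $\C(\proj A)$. Moreover $\eta$ lifts the tautological identification $\Xb[-2] = \Xb$ through $\pi$, and a standard modification by a null-homotopic piece lets us arrange the compatibility $\pi \circ \eta = \pi[-2]$ on the nose. Applying Lemma \ref{mod-2} to $\Qb$ and $\eta$ produces a minimal $2$-periodic complex $\Pb \in \C^2(\proj A)$ together with a chain isomorphism $g \colon \Pb \rt \Qb$ in $\C(\proj A)$.

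Finally, I would define $\phi \colon \Pb \rt \Xb$ by $\phi_0 = (\pi \circ g)_0$ and $\phi_1 = (\pi \circ g)_1$, extended $2$-periodically. A direct check using the explicit formulas for $g$ from the proof of Lemma \ref{mod-2} together with the compatibility $\pi \circ \eta = \pi[-2]$ shows that $\phi$ is a chain map: the chain-map condition at degree $0$ is immediate from the chain-map property of $\pi \circ g$ at $n = 0$, while the condition at degree $1$ unwinds precisely to the compatibility identity. Since the cohomology of a $2$-periodic complex depends only on the degree modulo $2$, and $H^0(\phi), H^1(\phi)$ coincide with $H^0(\pi \circ g), H^1(\pi \circ g)$ (both isomorphisms), $\phi$ is a quism. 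I expect the main obstacle to be the finiteness argument in the first step; the remaining steps amount to careful bookkeeping around Lemma \ref{mod-2}, once the compatibility of $\eta$ with $\pi$ has been arranged.
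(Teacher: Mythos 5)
Your overall strategy --- produce an ordinary minimal free resolution $\Qb \rt \Xb$, then use the periodicity of $\Xb$ together with Lemma \ref{mod-2} to fold it into a $2$-periodic complex --- is indeed the paper's, but there are two genuine gaps, and the second one is precisely the step the paper handles differently.

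First, the construction of a minimal $\Qb \in \C(\proj A)$ with a quism to $\Xb$ is far from routine and is exactly what the paper's Lemma \ref{bella} proves, via a non-trivial argument: truncate $\Xb$ to a bounded-above complex $\Yb$, take a minimal free resolution $\Pb$ of $\Yb$, lift the periodicity to a chain map $f \colon \Pb \rt \Pb[-2]$, show $f_n$ is an isomorphism for $n \ll 0$ using finite global dimension, and pass to the colimit $\Qb = \varinjlim \Pb[-2n]$. You cannot simply ``split off trivial summands'' from a $K$-projective resolution: that resolution lives in $\C(\Proj A)$, where the terms may be infinitely generated and need not even be free after the splitting, while Theorem \ref{red-min} and its non-periodic analogues apply to complexes of \emph{finitely generated} projectives over a local ring. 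Your Tor-spectral-sequence computation correctly bounds the ranks of a minimal model assuming one exists in $\C(\proj A)$, so as written the reasoning is circular: the existence of that minimal model is precisely the point.

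Second, the claim that a ``standard modification by a null-homotopic piece'' lets one arrange $\pi \circ \eta = \pi[-2]$ on the nose is unjustified. What is standard is $\pi\circ\eta \simeq \pi[-2]$ in $\K(\Mod A)$; to upgrade this to strict equality by replacing $\eta$ with $\eta - (\partial_Q t + t\,\partial_{Q[-2]})$ requires a degree-$(-1)$ map $t \colon \Qb[-2] \rt \Qb$ with $\pi t = s$ (where $s$ is the homotopy), i.e.\ a lift of $s$ through $\pi$ --- and a quism from a minimal complex is typically not degreewise surjective, so no such lift need exist. The paper avoids this entirely: instead of forcing $\pi\circ g$ to be $2$-periodic, it keeps the graded modules of the complex produced by Lemma \ref{mod-2}, replaces the even differentials by $\partial_{2n}\circ\psi_0^{-1}$, and then checks directly that $p_{2n} = g_0$, $p_{2n+1} = g_{-1}$ is a $2$-periodic quism. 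Your final step does not reproduce this and, absent the on-the-nose compatibility, does not go through as stated.
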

The following is a first step to prove Theorem \ref{res}.
\begin{lemma}\label{bella}
 (with hypotheses as in Thoerem \ref{res})
 There exists  a minimal complex $\Qb \in \C(\proj A)$ and a quism $\xi \colon \Qb \rt \Xb$ in $\C(\Mod A)$.
\end{lemma}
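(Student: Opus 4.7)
Set $d = \dim A$. The plan is to construct $\Qb$ by an exhaustion argument on bounded-above truncations of $\Xb$, using the finite global dimension of $A$ in an essential way.

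For each integer $N$, let $\tau^{\leq N}\Xb$ denote the good truncation of $\Xb$ at degree $N$: the bounded-above complex obtained by replacing $\Xb^N$ with the cocycles $\ker \partial^N$ and setting terms in degree $> N$ to zero. Then $\tau^{\leq N}\Xb$ has finitely generated cohomology in every degree. By the classical top-down construction of minimal free resolutions for bounded-above complexes with finitely generated cohomology over a Noetherian local ring, there is a minimal complex $\Pb(N) \in \C(\proj A)$ concentrated in degrees $\leq N$ together with a quism $\xi(N) \colon \Pb(N) \rt \tau^{\leq N}\Xb$ in $\C(\Mod A)$.

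Next I will compare $\Pb(N)$ for varying $N$. Since $A$ is regular of dimension $d$, every finitely generated $A$-module has projective dimension at most $d$. Consequently, the new cohomology introduced when passing from $\tau^{\leq N}\Xb$ to $\tau^{\leq N'}\Xb$ (for $N' > N$) can only perturb the minimal free resolution in degrees $\geq N+1-d$. By choosing lifts of the inclusions $\tau^{\leq N}\Xb \hookrightarrow \tau^{\leq N'}\Xb$ carefully, one can arrange that $\Pb(N)^m = \Pb(N')^m$ for all $m \leq N-d$, with matching differentials and matching structure maps to $\Xb$. Define $\Qb^m = \Pb(N)^m$ for any $N \geq m+d$, with the inherited differentials and inherited maps $\xi^m \colon \Qb^m \rt \Xb^m$. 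Then $\Qb$ is a minimal complex in $\C(\proj A)$ with each $\Qb^m$ finitely generated free, and $H^m(\xi) = H^m(\xi(N))$ for any $N \geq m$ is an isomorphism, so $\xi \colon \Qb \rt \Xb$ is a quism.

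The main technical obstacle is the stabilization argument in the preceding paragraph: one must show that minimal free resolutions of the truncations can be chosen coherently so that they agree in each fixed degree as $N$ grows. This requires a careful analysis of how a minimal resolution changes when new cohomology is added at the top of the complex, and it is precisely here that the finite global dimension of $A$ is essential. An alternative route would be to first produce a (possibly infinite-rank) K-projective resolution $\Pb \rt \Xb$ using \cite[5.1.7]{CFH}, then cut down to a finitely generated subcomplex, and finally split off a contractible summand (as in the proof of Theorem \ref{red-min}, applied to general complexes) to obtain a minimal representative; this shifts the technical burden but relies on the same finiteness input.
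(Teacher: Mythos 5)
Your plan takes a genuinely different route from the paper's proof. The paper exploits the $2$-periodicity from the outset: it forms a \emph{single} soft truncation $\Yb$ of $\Xb$ (bounded above in degree $1$, with $\image\partial_0$ at the top), takes a minimal resolution $\Pb\rt\Yb$ living in degrees $\leq 0$, and uses periodicity to produce a chain map $g\colon\Yb\rt\Yb[-2]$ which lifts to $f\colon\Pb\rt\Pb[-2]$. Regularity then forces $\cone(f)$ to be homotopy equivalent to a bounded complex, from which one deduces that $f_n$ is an isomorphism for $n\ll 0$; the wanted $\Qb$ is the colimit $\varinjlim\Pb[-2n]$, degreewise finitely generated free because the transition maps stabilize, and the quism to $\Xb$ comes from a telescope argument. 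Your plan never uses periodicity and, if completed, would prove the stronger statement that every complex with finitely generated cohomology over a regular local ring has a degreewise finitely generated minimal free resolution.

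The cost is that the stabilization step you flag as ``the main technical obstacle'' is a genuine gap, not a routine check. To make it work you need a result of roughly the following shape: if $\iota\colon\tau^{\leq N}\Xb\rt\tau^{\leq N'}\Xb$ is the canonical map and $\tilde\iota\colon\Pb(N)\rt\Pb(N')$ is any lift to the minimal resolutions, then $\tilde\iota_m$ is an isomorphism for $m\leq N-d$. The proof would go through an analysis of $\cone(\tilde\iota)$ together with minimality of both $\Pb(N)$ and $\Pb(N')$ --- this is where regularity enters, as in Lemma~\ref{acyclic} --- but you still then have to inductively \emph{rechoose} the resolutions and the comparison maps to $\Xb$ so that low-degree terms, differentials, and structure maps literally coincide and not merely agree up to non-canonical isomorphism; the phrase ``one can arrange'' asserts this without supplying it. Your alternative route has the same difficulty displaced: extracting a finitely generated free subcomplex from an infinite-rank $K$-projective resolution is exactly the degreewise finiteness you are trying to prove, and Theorem~\ref{red-min} applies only to $2$-periodic complexes so cannot be invoked for a general unbounded one. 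The paper's use of periodicity is precisely what lets it obtain all the needed stabilization from a single lift $f$ rather than from an unboundedly growing family of compatible choices.
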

We give a proof of Theorem \ref{res} by first assuming Lemma \ref{bella}
\begin{proof}[Proof of Theorem \ref{res}]
We may assume $H^*(\Xb) \neq 0$ (otherwise take $\Pb = 0$).
By Lemma \ref{bella} there exists  a minimal complex   $\Qb \in \C(\proj A)$ and a quism $\xi \colon \Qb \rt \Xb$ in $\C(\Mod A)$.
Notice we have a quism  $\xi[-2] \colon \Qb[-2] \rt \Xb[-2] = \Xb$.
Thus we have an invertible  map  $\phi = \xi^{-1} \circ \xi[-2]\colon \Qb[-2] \rt \Qb$ in $\D(\Mod A)$. By \ref{p-d} we get that $\phi \in \Hom_{\K(\Mod A)}(\Qb[-2], \Qb)$. As $\phi $ is invertible and $\Qb$ is minimal, by Lemma \ref{mod-2} there exists a minimal $2$-periodic complex $\Yb$ and a quism $f \colon \Yb \rt \Qb$. Set $g = \xi \circ f \colon  \Yb \rt \Xb$.
Note $g$ is a quism. As argued before we have  the map $\psi = g^{-1} \circ g[-2] \colon \Yb[-2] \rt \Yb$ is infact in  $\K(\Mod A)$. As $\Yb$ is $2$-periodic we have $\Yb = \Yb[-2]$.
We have $g \circ \psi = g[-2]$. So we have
\begin{equation*}
 g_i \circ \psi_i = g_{i-2} \quad \text{for all $i \in \Z$.} \tag{*}
\end{equation*}
 We also have by \ref{min-iso} that $\psi_i$ is in fact an isomorphism for all $i \in \Z$. \\
Consider $\Pb$ defined as follows:\\
$\Pb^n = \Yb^n$ for all $n \in \Z$. \\
Let $\partial_n$ be the differential in $\Yb$. Then the differential $\delta_n$ on $\Pb$ is defined as follows:\\
$\delta_{2n + 1} = \partial_{2n+1}$ and $\delta_{2n} = \partial_{2n} \circ  \psi_0^{-1}$ for all $n \in \Z$.

We first show $\Pb$ is a complex. Note $\delta_{2n+1} \circ \delta_{2n} = \partial_{2n+1} \circ  \partial_{2n} \circ \psi_{0}^{-1} = 0$. We also have
\begin{align*}
  \delta_{2n} \circ \delta_{2n -1}  &= \partial_{2n} \circ \psi_{0}^{-1} \circ \partial_{2n-1}  \\
   &=\partial_{0} \circ \psi^{-1}_0 \circ \partial_{-1}, \quad \text{as $\Yb$ is a $2$-periodic complex} \\
   &= \partial_{0} \circ \partial_{-1} \circ \psi^{-1}_{-1} \quad \text{as $\psi^{-1}$ is a chain map} \\
   &= 0.
\end{align*}
Clearly by construction $\Pb$ is a $2$-periodic complex. It is also minimal as $\Yb$ is minimal.

We define $p \colon \Pb \rt \Xb$ by $p_{2n} = g_0$ and $p_{2n + 1} = g_{-1}$. We show $p$ is a chain map. Clearly $p$ is circular.
Let $\alpha_n \colon \Xb^n \rt \Xb^{n+1}$ be the differential of $\Xb$.
Consider the diagram
\[
  \xymatrix
{
 \Pb^{-2}
\ar@{->}[r]^{\delta_{-2}}
\ar@{->}[d]^{g_0}
 & \Pb^{-1}
\ar@{->}[r]^{\delta_{-1}}
\ar@{->}[d]^{g_{-1}}
& \Pb^0
\ar@{->}[d]^{g_0}
&
\\
 \Xb^{-2}
\ar@{->}[r]^{\alpha_{-2}}
 & \Xb^{-1}
\ar@{->}[r]^{\alpha_{-1}}
& \Xb^{0}
\
 }
\]
As $g \colon \Yb \rt \Xb$ is a chain map we get that the righthand square above is commutative. We now note that
\begin{align*}
 g_{-1} \circ \delta_{-2} &=  g_{-1} \circ \partial_{-2} \circ \psi_0^{-1} \\
 &=  \alpha_{-2} \circ g_{-2} \circ \psi_0^{-1} \quad \text{as $g \colon \Yb \rt \Xb$ is a chain map.} \\
 &= \alpha_{-2} \circ g_0 \quad \text{ from (*) with $i = 0$}.
\end{align*}
We now show that $p$ induces an isomorphism in cohomology. As the degree $0, -1$ maps of $p$ are same as that of $g$ and $g$ is a quism, it suffices to prove that the cycles $Z^{i}(\Pb) = Z^{i}(\Yb)$ for $i = 0, -1$ and the boundaries $B^i(\Pb) = B^i(\Yb)$ $i = 0, -1$. Clearly $Z^{-1}(\Pb) = Z^{-1}(\Yb)$. Furthermore as $\psi_0^{-1}$ is an isomorphism we get that $B^{-1}(\Pb) = B^{-1}(\Yb)$. Clearly $B^{0}(\Pb) = B^0(\Yb)$. Also as $\psi^{-1}_0$ is an isomorphism  and $\Yb$ is cyclic we get
$$Z^0(\Pb) = \ker \partial_{-2} \circ \psi_0^{-1} = \ker \partial_{0} \circ \psi_0^{-1}  = \ker \partial_0 = Z^0(\Yb).$$
The result follows.
\end{proof}
We now give
\begin{proof}[Proof of Lemma \ref{bella}]
We will assume $H^*(\Xb) \neq 0$ otherwise there is nothing to show (we may take $\Qb = 0$).
Let $\partial_n \colon \Xb^n \rt \Xb^{n+1}$ be the differential of $\Xb$
Let $\Yb$ be the complex
$$ \cdots \rt \Xb^{-n}\rt  \cdots \rt \Xb^{-2} \rt \Xb^{-1} \rt \Xb^0 \xrightarrow{\partial_0} \image \partial_0 \rt 0. $$
Let $\Pb \in \C(\proj A)$ be a  minimal projective resolution of $\Yb$, i.e., we have a quism $p \colon \Pb \rt \Yb$.
Furthermore note that as $H^i(\Pb) = 0$ for $i > 0$ and as $\Pb$ is a minimal complex it follows that $\Pb^i = 0$ for $i > 0$.

Using the fact that $\Xb$ is a $2$-periodic complex we have a chain map $g \colon \Yb \rt \Yb[-2]$,
\[
  \xymatrix
{
\cdots
 &\Xb^{0}
\ar@{->}[r]^{ \partial_0}
\ar@{->}[d]^{1}
 & \image \partial_0
\ar@{->}[r]
\ar@{->}[d]^{j}
& 0
\ar@{->}[d]
& \
& \
&
\\
\cdots
 &\Xb^{-2}
\ar@{->}[r]^{\partial_{-2}}
 & \Xb^{-1}
\ar@{->}[r]
& \Xb^{0}
\ar@{->}[r]
& \image \partial^0
\ar@{->}[r]
&0
\
 }
\]
(here $j$ is an inclusion map). Note we are using the fact that $\Xb$ is $2$-periodic.  It follows that we have a chain map $f \colon \Pb \rt \Pb[-2]$ so that  we have the following  commutative diagram in $\K(\Mod A)$.
\[
  \xymatrix
{
 \Pb
\ar@{->}[r]^{p}
\ar@{->}[d]^{f}
 & \Yb
\ar@{->}[d]^{g}
&
\\
 \Pb[-2]
\ar@{->}[r]^{p[-2]}
 & \Yb[-2]
\
 }
\]
We note that $H^i(g) \colon H^i(\Yb) \rt H^i(\Yb[-2])$ is an isomorphism for $i \leq 0$. It follows that $H^i(f) \colon H^i(\Pb) \rt H^i(\Pb[-2])$ is an isomorphism for $i \leq 0$. We note that $\cone(f)$ is bounded above and $H^i(\cone(f)) =0 $ for $i \leq -1$. As $A$ is regular we have an isomorphism $\phi \colon \cone(f) \cong \Zb$ in $\K(\proj A)$ where $\Zb \in \K^b(\proj A)$ is a bounded complex of projective modules.

\textit{Claim-1:}  The maps $f_n$ are isomorphisms for $n \ll 0$. \\
We have a commutative diagram in $\K(\Mod A)$:
\[
  \xymatrix
{
 \Pb
\ar@{->}[r]^{ f}
\ar@{->}[d]^{\psi}
 & \Pb[-2]
\ar@{->}[r]^{i}
\ar@{->}[d]^{j}
& \cone(f)
\ar@{->}[d]^\phi
\ar@{->}[r]
& \Pb[1]
\ar@{->}[d]^{\psi[1]}
\\
 \cone(\phi \circ i [-1])
\ar@{->}[r]^{\delta}
 & \Pb[-2]
\ar@{->}[r]^{\phi \circ i}
& \Zb
\ar@{->}[r]
& \cone(\phi \circ i[-1])[1]
\
 }
\]
Here $j$ is the identity map. Note we have taken cone of $\phi \circ i [-1]$ and not of $\phi \circ i$ in the bottom row. Note $\psi$ exists because $\K(\Mod A)$ is a triangulated category. Furthermore as $j$ and $\phi$ are isomorphism we get $\psi$ is an isomorphism. Furthermore as $\Zb$ is a bounded complex  (and as $\Pb[-2]$ is a minimal complex) we get that the differentials in $\cone(\phi \circ i[-1])$ are minimal for $n \ll 0$ and $\delta_n$ is an isomorphism for $n \ll 0$. As $\psi$ is an isomorphism, it follows from
the fact that $\Pb$ is a minimal complex  and the differentials in $\cone(\phi \circ i[-1])$ are minimal for $n \ll 0$, that $\psi_n$ is an isomorphism for $n \ll 0$.
Note $f$ is homotopic to $\theta = j^{-1} \circ \delta \circ \psi$ and as $\Pb$ is a minimal complex and $\theta_n$ is an isomorphism for $n \ll 0$ it follows that $f_n$ is surjective for $n \ll 0$. As $\theta_n $ is an isomorphism for $n \ll 0$ it follows that $\rank \Pb^n =  \rank \Pb^{n-2}$ for $n \ll 0$. It follows that $f_n$ is an isomorphism for $n \ll 0$. Thus claim 1 is proved.

Consider the direct limits of the maps
\begin{enumerate}
  \item $$ \Yb \xrightarrow{g} \Yb[-2] \xrightarrow{g[-2]} \Yb[-4] \rt \cdots \Yb[-2n] \xrightarrow{g[-2n]} \Yb[-2n -2] \rt \cdots.$$
  and
  \item
   $$ \Pb \xrightarrow{f} \Pb[-2] \xrightarrow{f[-2]} \Pb[-4] \rt \cdots \Pb[-2n] \xrightarrow{f[-2n]} \Pb[-2n -2] \rt \cdots.$$
\end{enumerate}
It is clear that the first direct limit is $\Xb$. Also note if $r \in \Z$ is fixed then  by Claim-1 the maps $ \Pb[-2n]^r \xrightarrow{f[-2n]_r} \Pb[-2n -2]^r$ are isomorphisms for $n \gg 0$. it follows that $\Qb = \lim \Pb[-2n] \in \K(\proj A)$.

We have an exact sequence
 $$  0 \rt \bigoplus_{n \geq 0} \Yb[-2n] \xrightarrow{\pi_{\Yb}} \bigoplus_{n \geq 0}\Yb[-2n] \rt \left(\lim \Yb[-2n] = \Xb\right) \rt 0,$$
 where
 $$ \pi_{\Yb} =
 \begin{bmatrix}
   1_{\Yb} & 0 & 0 & 0 & \cdots\\
   -g & 1_{\Yb[-2]} & 0 & 0 & \cdots \\
   0 & -g[-2] & 1_{\Yb[-4]}& 0 & \cdots \\
   0 & 0 & -g[-4]&1_{\Yb[-6]} & \cdots \\
   \cdots & \cdots & \cdots & \cdots & \cdots
 \end{bmatrix}
$$

In $\D(\Mod A)$ we have a triangle
$$ \bigoplus_{n \geq 0} \Yb[-2n] \xrightarrow{\pi_{\Yb}} \bigoplus_{n \geq 0}\Yb[-2n] \rt  \Xb  \rt \left(\bigoplus_{n \geq 0} \Yb[-2n]. \right)[1]. $$
Similarly we have a triangle in $\D(\Mod A)$,
$$ \bigoplus_{n \geq 0} \Pb[-2n] \xrightarrow{\pi_{\Pb}} \bigoplus_{n \geq 0}\Pb[-2n] \rt  \Qb  \rt \left(\bigoplus_{n \geq 0} \Pb[-2n]. \right)[1]. $$
 We have the following  commutative diagram in $\K(\Mod A)$ (and so in $\D(\Mod A)$).
\[
  \xymatrix
{
 \Pb[-2n]
\ar@{->}[r]^{p[-2n]}
\ar@{->}[d]^{f[-2n]}
 & \Yb[-2n]
\ar@{->}[d]^{g[-2n]}
&
\\
 \Pb[-2n -2]
\ar@{->}[r]^{p[-2n -2]}
 & \Yb[-2n -2]
\
 }
\]
We note that $p[-2n]$ is an isomorphism in $\D(\Mod A)$. So we have a commutative diagram in $\D(\Mod A)$,
\[
  \xymatrix
{
 \bigoplus_{n \geq 0} \Pb[-2n]
\ar@{->}[r]^{ \pi_{\Pb}}
\ar@{->}[d]^{\psi = \bigoplus_{n \geq 0}p[-2n]}
 & \bigoplus_{n \geq 0} \Pb[-2n]
\ar@{->}[r]
\ar@{->}[d]^{\psi}
& \Qb
\ar@{->}[d]^\phi
\ar@{->}[r]
& \left(\bigoplus_{n \geq 0} \Pb[-2n]\right)[1]
\ar@{->}[d]^{\psi[1]}
\\
 \bigoplus_{n \geq 0} \Yb[-2n]
\ar@{->}[r]^{\pi_{\Yb}}
 & \bigoplus_{n \geq 0} \Yb[-2n]
\ar@{->}[r]
& \Xb
\ar@{->}[r]
& \left(\bigoplus_{n \geq 0} \Yb[-2n]\right)[1]
\
 }
\]
The first square is commutative.
We note that $\phi$ exists since $\D(\Mod A)$ is a triangulated category. Note $\psi$ is an isomorphism. So $\phi$ is an isomorphism in $\D(\Mod A)$.
The result follows from Theorem \ref{p-d}.
\end{proof}

\section{Proof of Theorem \ref{first}}
In this section we give a proof of Theorem \ref{first}. Throughout  this section $(A,\m)$ is a regular local ring.  We first show
\begin{proposition}
\label{acyclic-c}
Let $\Pb \in \K^2(\proj A)$. If $H^*(\Pb) = 0$ then $\Pb = 0$.
\end{proposition}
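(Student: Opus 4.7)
The plan is to reduce to the minimal case via Theorem \ref{red-min} and then exploit regularity to kill the minimal piece outright.

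First I would invoke Theorem \ref{red-min} to write $\Pb = \Yb \oplus \Zb$ in $\C^2(\proj A)$ with $\Yb$ minimal and $\Zb$ a finite direct sum of the contractible complexes from Lemma \ref{zero-c}. Since $\Zb$ is null-homotopic we have $H^*(\Zb)=0$, so $H^*(\Yb) = H^*(\Pb) = 0$. Also $\Pb \cong \Yb$ in $\K^2(\proj A)$, so it suffices to show that a minimal acyclic $\Yb \in \C^2(\proj A)$ is actually the zero complex in $\C^2(\proj A)$.

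Assume therefore that $\Yb$ is minimal and acyclic, with differentials $\partial^0 \colon \Yb^0 \to \Yb^1$ and $\partial^1 \colon \Yb^1 \to \Yb^0$, and set $K_0 = \ker \partial^0 = \image \partial^1$, $K_1 = \ker \partial^1 = \image \partial^0$. Acyclicity plus $2$-periodicity gives an infinite free resolution
\[
 \cdots \rt \Yb^1 \rt \Yb^0 \rt \Yb^1 \rt \Yb^0 \rt K_1 \rt 0,
\]
in which the syzygies alternate between $K_0$ and $K_1$. Because $A$ is regular of dimension $d$, we have $\operatorname{pd}_A K_1 \leq d$, so the $d$-th syzygy in this resolution must be projective, hence free. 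By the periodic alternation this forces both $K_0$ and $K_1$ to be free $A$-modules.

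Next, since $K_1$ is free the short exact sequence $0 \rt K_0 \rt \Yb^0 \rt K_1 \rt 0$ splits; write $\Yb^0 = K_0 \oplus K_1'$ where $K_0$ sits as the first summand and $K_1' \cong K_1$. Minimality gives $K_0 = \image \partial^1 \subseteq \m \Yb^0 = \m K_0 \oplus \m K_1'$. For $x \in K_0$ writing $x = a + b$ with $a \in \m K_0$, $b \in \m K_1'$ yields $b = x - a \in K_0 \cap K_1' = 0$, so $x \in \m K_0$. Thus $K_0 \subseteq \m K_0$ and Nakayama gives $K_0 = 0$; symmetrically $K_1 = 0$, so $\Yb^0 = \Yb^1 = 0$ and $\Yb = 0$.

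The only step that needs care is the freeness of $K_0$ and $K_1$: one has to be sure that the bi-infinite periodic complex genuinely realises each $K_i$ as an arbitrarily-high syzygy, so that the bound $\operatorname{pd}_A \leq d$ forces freeness. Everything after that (splitting and Nakayama against minimality) is routine.
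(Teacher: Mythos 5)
Your proposal is correct and follows essentially the same route as the paper: decompose $\Pb = \Yb \oplus \Zb$ via Theorem \ref{red-min}, reduce to showing a minimal acyclic $2$-periodic complex of finite free modules is zero, and derive this from regularity via the infinite free resolution of the image of a differential. The paper dispatches the final step in one line (``an infinite minimal resolution of $\image\partial_0$ contradicts regular local''), invoking the standard fact that over a regular local ring minimal free resolutions terminate; you instead unpack that fact explicitly --- deducing that the syzygies $K_0, K_1$ are free from $\operatorname{pd}_A \leq d$, splitting off a free summand, and applying Nakayama against minimality. This is a correct, slightly more self-contained elaboration of the same argument rather than a genuinely different proof. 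One small remark: your version actually sidesteps the (harmless) implicit point in the paper that one must know $\image\partial_0 \neq 0$ for the contradiction to bite; your Nakayama argument kills $K_0$ and $K_1$ directly without needing that observation.
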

\begin{proof}
By \ref{red-min}, we have $\Pb = \Xb \oplus \Yb$ in $\C^2(\proj A)$ where $\Xb$ is a minimal complex and $\Yb = 0$ in $\K^2(\proj A)$. So it suffices to show that if $\Xb$ is a minimal complex in $\C^2(\proj A)$ then $H^*(\Xb) \neq 0$. Suppose if possible $H^*(\Xb) =0$ then
$$  \cdots \Xb^{-n} \rt \cdots \Xb^{-1} \rt \Xb^{0} \rt \image \partial_0 \rt 0$$
is an infinite minimal resolution of $\image \partial_0$. This contradicts the fact that $A$ is regular local.
\end{proof}
As a consequence we have
\begin{corollary}
  \label{q-c} Let $\Pb, \Qb \in \K^2(\proj A)$. If $f \colon \Pb \rt \Qb $ is a $2$-periodic quism then $f$ is invertible in $\K^2(\proj A)$.
\end{corollary}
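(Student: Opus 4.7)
The plan is to mimic the argument used for Corollary \ref{q-proj} in the unbounded setting, only now working inside the $2$-periodic triangulated category $\K^2(\proj A)$ and appealing to Proposition \ref{acyclic-c} in place of Lemma \ref{acyclic}.

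First I would embed $f \colon \Pb \rt \Qb$ into a strict triangle
\[
 \Pb \xrightarrow{f} \Qb \xrightarrow{u} \cone(f) \xrightarrow{v} \Pb[1]
\]
in $\K^2(\proj A)$. The key observation here is that $\cone(f)$ genuinely lives in $\C^2(\proj A)$: its degree $n$ term is $\Pb^{n+1} \oplus \Qb^n$, which is a finitely generated free $A$-module, and the $2$-periodicity of $\Pb$, $\Qb$, and $f$ forces $2$-periodicity of $\cone(f)$ and its differential, as recorded in the construction of cones for $2$-periodic maps earlier in the paper.

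Next, applying cohomology to the triangle yields a long exact sequence in $H^*$; since $f$ is a quism, $H^*(f)$ is an isomorphism, and hence $H^*(\cone(f)) = 0$. By Proposition \ref{acyclic-c}, an acyclic object of $\K^2(\proj A)$ is zero, so $\cone(f) = 0$ in $\K^2(\proj A)$.

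Finally, a general fact about triangulated categories (a morphism whose cone is zero is an isomorphism) gives that $f$ is invertible in $\K^2(\proj A)$. There is no real obstacle; the only point to check carefully is that the cone construction remains in $\C^2(\proj A)$, which is immediate from the definitions, so that Proposition \ref{acyclic-c} can be applied.
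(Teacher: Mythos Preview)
Your proposal is correct and follows essentially the same approach as the paper: form the triangle $\Pb \xrightarrow{f} \Qb \rt \cone(f) \rt \Pb[1]$ in $\K^2(\proj A)$, observe that $H^*(\cone(f)) = 0$ since $f$ is a quism, apply Proposition~\ref{acyclic-c} to conclude $\cone(f) = 0$, and deduce that $f$ is an isomorphism. The paper's proof is slightly terser but identical in substance.
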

\begin{proof}
  We have a triangle $\Pb \xrightarrow{f} \Qb \rt \cone(f) \rt \Pb[1]$ in $\K^2(\proj A)$. As $f$ is a quism it follows that $H^*(\cone f) = 0$. So by \ref{acyclic-c} we get that $\cone(f) = 0$
  in $\K^2(\proj A)$. It follows that $f$ is an isomorphism in $\K^2(\proj A)$.
\end{proof}

\s Let $\K^2_{fg}(\Mod A)$ be the homotopy category of $2$-periodic complexes with finitely generated cohomology, i.e., a $2$-periodic complex $\Xb$  is in  $\K^2_{fg}(\Mod A)$ if $H^i(\Xb)$ is finitely generated $A$-module for all $i$. Let $\D^2_{fg}(\Mod A)$ be its derived category.
We first show
\begin{lemma}
  \label{c-left} Let $\Pb \in \K^2(\proj A)$ and let $\Xb \in \K^2_{fg}(\Mod A)$. If $s \colon \Xb \rt \Pb$ is a $2$-periodic quism then there exists a $2$-periodic quism $t \colon \Pb \rt \Xb$ such that
  $s\circ t = 1_{\Pb}$.
\end{lemma}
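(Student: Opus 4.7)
The strategy is to imitate the proof of Lemma \ref{right} from Section 2, but now inside the $2$-periodic world, using Theorem \ref{res} as a replacement for the existence of ordinary projective resolutions and Corollary \ref{q-c} in place of Corollary \ref{q-proj}.

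The plan is as follows. First, since $\Xb \in \K^2_{fg}(\Mod A)$, Theorem \ref{res} supplies a complex $\Qb \in \C^2(\proj A)$ together with a $2$-periodic quism $\eta \colon \Qb \rt \Xb$ in $\C^2(\Mod A)$. Next, I would form the composition
\[
g \;=\; s \circ \eta \;\colon\; \Qb \xrightarrow{\eta} \Xb \xrightarrow{s} \Pb,
\]
which is a $2$-periodic chain map between objects of $\K^2(\proj A)$; since both $\eta$ and $s$ are quisms, so is $g$. By Corollary \ref{q-c}, $g$ is therefore invertible in $\K^2(\proj A)$; let $g^{-1} \colon \Pb \rt \Qb$ denote its inverse there.

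I would then define the desired morphism by
\[
t \;=\; \eta \circ g^{-1} \;\colon\; \Pb \xrightarrow{g^{-1}} \Qb \xrightarrow{\eta} \Xb,
\]
interpreted as a morphism in $\K^2(\Mod A)$. Since $\eta$ is a quism and $g^{-1}$ is an isomorphism in $\K^2(\proj A)$ (hence in particular induces an isomorphism on cohomology), the composition $t$ is a $2$-periodic quism. A direct computation then gives
\[
s \circ t \;=\; s \circ \eta \circ g^{-1} \;=\; g \circ g^{-1} \;=\; 1_{\Pb}
\]
in $\K^2(\Mod A)$, which is the required identity.

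The only real issue is that the identities above live a priori in different categories: $g^{-1}$ is constructed inside $\K^2(\proj A)$, while $\eta$ is a genuine chain map in $\C^2(\Mod A)$. However, the inclusion $\K^2(\proj A) \hookrightarrow \K^2(\Mod A)$ is a functor, so $g^{-1}$ makes sense as a morphism in $\K^2(\Mod A)$ and the composition $s \circ t = g \circ g^{-1} = 1_{\Pb}$ holds in $\K^2(\Mod A)$. I do not foresee any serious obstacle; the content of the lemma is really contained in Theorem \ref{res} and Corollary \ref{q-c}, and the argument is just the $2$-periodic transcription of the ordinary statement.
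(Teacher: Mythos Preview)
Your proof is correct and follows essentially the same approach as the paper's own argument: both invoke Theorem~\ref{res} to obtain a $2$-periodic quism $\eta \colon \Qb \rt \Xb$ with $\Qb \in \C^2(\proj A)$, observe that $g = s \circ \eta$ is invertible in $\K^2(\proj A)$ by Corollary~\ref{q-c}, and set $t = \eta \circ g^{-1}$. Your additional remarks about the compatibility of the categories involved are accurate and make the argument slightly more explicit than the paper's terse version.
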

\begin{proof}
  By Theorem \ref{res} there exists a $2$-periodic quism  $\eta \colon \Qb \rt \Xb$ be  with $\Qb \in \C(\proj A)$. Then $g = s \circ \eta \colon \Qb \rt \Pb$ is a quism and so invertibe in $\K^2(\proj A)$. Set $t = \eta  \circ g^{-1}$. The result follows.
\end{proof}

Next we show
\begin{theorem}
 \label{p-d-c} Let $\Pb \in \K^2(\proj A)$ and let $\Xb \in \K^2_{fg}(\Mod A)$. Then the natural map
 \[
 \theta \colon  \Hom_{\K^2_{fg}(\Mod A)}(\Pb, \Xb) \rt  \Hom_{\D^2_{fg}(\Mod A)}(\Pb, \Xb)
 \]
is an isomorphism.
\end{theorem}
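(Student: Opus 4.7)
The statement is the $2$-periodic analogue of Theorem \ref{p-d}, and I would prove it by transporting the earlier argument verbatim, with the key input being Lemma \ref{c-left} in place of Lemma \ref{right}. The verification consists of showing surjectivity and injectivity of $\theta$ separately; neither step requires genuinely new ideas beyond what has been set up.

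For surjectivity, I would take an arbitrary morphism in $\Hom_{\D^2_{fg}(\Mod A)}(\Pb, \Xb)$ and represent it as a left fraction $f \circ s^{-1} \colon \Pb \xleftarrow{s} \Zb \xrightarrow{f} \Xb$, where $s \colon \Zb \rt \Pb$ is a $2$-periodic quism and $\Zb \in \K^2_{fg}(\Mod A)$. Apply Lemma \ref{c-left} to $s$: this produces a $2$-periodic quism $t \colon \Pb \rt \Zb$ satisfying $s \circ t = 1_{\Pb}$ in $\K^2_{fg}(\Mod A)$. A direct diagram check then yields $f \circ s^{-1} = f \circ t$ in $\D^2_{fg}(\Mod A)$, so the class $f \circ s^{-1}$ lies in the image of $\theta$.

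For injectivity, suppose $f \colon \Pb \rt \Xb$ is a $2$-periodic chain map with $\theta(f) = 0$. By the standard criterion for a morphism in a calculus-of-fractions category to vanish (cf.\ \cite[2.1.26]{N}), there exists an object $\Zb \in \K^2_{fg}(\Mod A)$ and a $2$-periodic quism $s \colon \Zb \rt \Pb$ with $f \circ s = 0$ in $\K^2_{fg}(\Mod A)$. Apply Lemma \ref{c-left} once more to obtain a $2$-periodic quism $t \colon \Pb \rt \Zb$ with $s \circ t = 1_{\Pb}$. Then $f = f \circ 1_{\Pb} = f \circ s \circ t = 0$ in $\K^2_{fg}(\Mod A)$, giving injectivity.

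The only nontrivial ingredient is the existence of the section $t$, which rests on Theorem \ref{res} (projective resolution by a $2$-periodic complex of projectives), and on Corollary \ref{q-c} (that quisms between complexes in $\K^2(\proj A)$ are invertible). Both have already been established in preceding sections, so the main obstacle was really the preparatory work; the proof of Theorem \ref{p-d-c} itself is a purely formal consequence, structurally identical to the unbounded case handled in Theorem \ref{p-d}.
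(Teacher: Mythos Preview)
Your proof is correct and essentially identical to the paper's own argument: both show surjectivity by straightening a left fraction via the section $t$ from Lemma \ref{c-left}, and injectivity by invoking \cite[2.1.26]{N} and the same section trick. The paper's proof is literally the $2$-periodic transcription of Theorem \ref{p-d} that you describe.
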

\begin{proof}
Set $\D = \D^2_{fg}(\Mod A)$.
 Let $f \circ s^{-1} \colon \Pb \xleftarrow{s} \Zb \xrightarrow{f} \Xb$ be a left fraction. Note $s$ is a quism. By \ref{c-left} there exists a $2$-periodic quism $t \colon  \Pb \rt \Zb$ such that $s\circ t = 1_{\Pb}$. Then check that $f\circ s^{-1} = f\circ t \in   \Hom_{\D}(\Pb, \Xb)$. Thus $\theta$ is surjective.

 Let $f \colon \Pb \rt \Xb$ be such that $\theta(f) = 0$. Then by \cite[2.1.26]{N}, there exists a $2$-periodic quism $s \colon \Zb \rt \Pb$ such that $f \circ s = 0$. By \ref{c-left} there exists a $2$-periodic quism $t \colon  \Pb \rt \Zb$ such that $s\circ t = 1_{\Pb}$.
 Then $f = f \circ s \circ t = 0$. So $\theta$ is injective.
\end{proof}
As a consequence we give a proof of Theorem \ref{first}. We restate it for the convenience of the reader.
\begin{theorem}
\label{fg=equi} The natural map $\eta \colon \K^2(\proj A) \rt \D^2_{fg}(\Mod A)$ is an equivalence of triangulated categories.
\end{theorem}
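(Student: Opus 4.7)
The plan is to establish that $\eta$ satisfies the three standard properties of an equivalence of triangulated categories: exactness, full faithfulness, and essential surjectivity. Exactness is automatic: $\eta$ factors as the inclusion $\K^2(\proj A) \hookrightarrow \K^2_{fg}(\Mod A)$ (well-defined because a $2$-periodic complex of finitely generated free modules has finitely generated cohomology in every degree) followed by the Verdier localization $\K^2_{fg}(\Mod A) \rt \D^2_{fg}(\Mod A)$, both of which are triangulated.

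For full faithfulness, fix $\Pb, \Qb \in \K^2(\proj A)$. I would first observe that the morphism groups in $\K^2(\proj A)$ and in $\K^2_{fg}(\Mod A)$ between these two objects coincide tautologically: a $2$-periodic chain map and a $2$-periodic chain homotopy between complexes of finitely generated free modules are literally the same data whether we view the terms inside $\proj A$ or inside $\Mod A$. Theorem \ref{p-d-c}, applied with $\Xb = \Qb$ (which indeed lies in $\K^2_{fg}(\Mod A)$), then identifies $\Hom_{\K^2_{fg}(\Mod A)}(\Pb, \Qb)$ with $\Hom_{\D^2_{fg}(\Mod A)}(\Pb, \Qb)$. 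Composing the two identifications yields precisely the statement that $\eta$ is fully faithful.

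For essential surjectivity, take an arbitrary $\Xb \in \D^2_{fg}(\Mod A)$, represented by an object of $\K^2_{fg}(\Mod A)$. By Theorem \ref{res} there exists $\Pb \in \C^2(\proj A)$ and a $2$-periodic quism $\phi \colon \Pb \rt \Xb$ in $\C^2(\Mod A)$. Since quisms become isomorphisms on passage to the Verdier quotient, $\phi$ determines an isomorphism $\eta(\Pb) \cong \Xb$ in $\D^2_{fg}(\Mod A)$, so every isomorphism class of objects is hit by $\eta$.

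Combining the three properties, $\eta$ is an equivalence of triangulated categories. The substantive work has already been done: the existence of $2$-periodic projective resolutions (Theorem \ref{res}) and the Hom comparison (Theorem \ref{p-d-c}) are both genuine consequences of the regularity of $A$, while the present theorem is a formal unwinding of those inputs and presents no real obstacle beyond bookkeeping.
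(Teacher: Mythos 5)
Your proposal is correct and follows essentially the same route as the paper: the paper's own proof simply cites Theorem \ref{p-d-c} for full faithfulness and Theorem \ref{res} for density. You spell out the (tautological) identification of Hom-groups in $\K^2(\proj A)$ and $\K^2_{fg}(\Mod A)$ before invoking \ref{p-d-c}, and you note the exactness of $\eta$ explicitly, but the substance is identical to the paper's argument.
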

\begin{proof}
  By Theorem \ref{p-d-c} it follows that $\eta$ is fully faithful. By Theorem \ref{res} it is dense. The result follows.
\end{proof}

We will need the following result:
\begin{proposition}
\label{dual-fg}
Let $\Xb \in \K^2(\proj A)$.
\begin{enumerate}[\rm (1)]
  \item If $\Xb = 0$ in  $\K^2(\proj A)$ then $\Xb^* = 0$ in  $\K^2(\proj A)$
  \item If $H^i(\Xb)$ has finite length for all $i$ then $H^i(\Xb^*)$ has finite length for all $i$.
\end{enumerate}
\end{proposition}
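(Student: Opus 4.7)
The plan is to prove (1) via the structural decomposition of Theorem \ref{red-min} combined with the duality behavior of the class $\W$, and then to deduce (2) by localizing at each nonmaximal prime and invoking (1) over the localization.

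For (1), suppose $\Xb = 0$ in $\K^2(\proj A)$. By Theorem \ref{red-min} we may write $\Xb = \Yb \oplus \Zb$ in $\C^2(\proj A)$ with $\Yb$ minimal and $\Zb$ a finite direct sum of complexes in $\W$. Since $\Zb = 0$ in $\K^2(\proj A)$ and $\Xb = 0$ in $\K^2(\proj A)$, the minimal summand $\Yb$ is also zero in $\K^2(\proj A)$; by Remark \ref{min-nonzero} this forces $\Yb = 0$ in $\C^2(\proj A)$. Thus $\Xb$ is itself a finite direct sum of complexes of types (1) and (2) from Lemma \ref{zero-c}. Since the $A$-dual is additive, $\Xb^*$ is a finite direct sum of the duals of such complexes, and Proposition \ref{W-class}(2) tells us each such dual again lies in $\W$. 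By Lemma \ref{zero-c}, $\Xb^* = 0$ in $\K^2(\proj A)$.

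For (2), let $\p \in \Spec A$ with $\p \neq \m$. Since each $\Xb^i$ is finitely generated free, localization commutes with $\Hom_A(-,A)$, so $(\Xb^*)_\p = (\Xb_\p)^*$ as objects of $\C^2(\proj A_\p)$. By hypothesis each $H^i(\Xb)$ has finite length, hence $H^i(\Xb_\p) = H^i(\Xb)_\p = 0$ for all $i$. Since $A_\p$ is regular local, Proposition \ref{acyclic-c} gives $\Xb_\p = 0$ in $\K^2(\proj A_\p)$. Applying part (1) over $A_\p$ yields $(\Xb_\p)^* = 0$ in $\K^2(\proj A_\p)$, and in particular $H^i(\Xb^*)_\p = H^i((\Xb_\p)^*) = 0$ for all $i$. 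Therefore each $H^i(\Xb^*)$ has support contained in $\{\m\}$. Since $\Xb^*$ has finitely generated free terms, its cohomology is finitely generated; combined with the support condition this gives that $H^i(\Xb^*)$ has finite length, as required.

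The only point needing care is the reduction in (1): one must observe that $\Yb = 0$ in $\K^2(\proj A)$ forces $\Yb = 0$ in $\C^2(\proj A)$ for minimal $\Yb$. This is exactly the content of Remark \ref{min-nonzero} (or one may argue directly by indecomposable summands via Theorem \ref{ks}). Beyond this, both parts are formal consequences of earlier results and present no serious obstacle.
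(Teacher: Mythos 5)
Your argument is correct and coincides with the paper's proof in both parts: (1) decompose $\Xb$ via Theorem \ref{red-min}, observe the minimal summand must vanish, and invoke Proposition \ref{W-class}(2); (2) localize at primes other than $\m$, apply Proposition \ref{acyclic-c} and part (1) over the regular local ring $A_P$, and conclude from the support condition. The only cosmetic difference is that you explicitly cite Remark \ref{min-nonzero} to justify that the minimal summand $\Yb$ vanishes, a step the paper takes as understood.
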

\begin{proof}
(1) By \ref{red-min}, $\Xb = \Yb \oplus \Zb$ in $\C^2(\proj A)$ where $\Yb$ is a minimal complex and $\Zb = 0$ in $\K^2(\proj A)$.  Furthermore $\Zb$ is of the type $\W$ described in \ref{zero-c}.
As $\Xb = 0$ in $\K^2(\proj A)$  it follows that $\Yb = 0$. So $\Xb = \Zb$. By \ref{W-class} we get that $\Zb^* \in \W$. So $\Xb^* = 0$ in $\K^2(\proj A)$.

(2) Let $P \neq \m$ be a prime ideal in $A$. Note $A_P$ is regular local.  If $H^i(\Xb)$ has finite length for all $i$ then $H^i({\Xb}_{P}) \cong  H^i(\Xb)_P = 0$ for all $i$. By \ref{acyclic-c}
we get that ${\Xb}_P = 0$ in $\K^2(\proj A_P)$. By (1) we get that $({\Xb}_P)^* = 0$ in $\K^2(\proj A_P)$. Notice $({\Xb}_P)^* \cong (\Xb^*)_P$. It follows that $H^i(\Xb^*)$ has finite length for all $i$.
\end{proof}

\section{Preliminaries on AR-triangles and irreducible maps}

 The notion of AR-triangles in a triangulated Krull-Schmidt category was introduced by Happel \cite[Chapter 1]{Happel}.
In this section we recall the definition of AR-triangles, discuss a result from \cite{RV} on the existence of AR-triangles and discuss some facts about irreducible maps.

\s Let $\Cc$  be a Krull-Schmidt triangulated category with shift functor $ \sum$.

A triangle $ N \xrightarrow{f} E \xrightarrow{g} M \xrightarrow{h} \sum N$  in $\Cc$ is called a \emph{ right AR-triangle} (ending at $M$) if

   (RAR1) \  \ $M, N$ are indecomposable.

   (RAR2) \ \  $h \neq 0$.

   (RAR3) \  \ If $D$ is indecomposable then for every non-isomorphism $t \colon D \rt M$ we have $h\circ t = 0$.

Dually,  a triangle $\sum^{-1} M \xrightarrow{w} N \xrightarrow{f} E \xrightarrow{g} M $  in $\Cc$ is called a \emph{left AR-triangle} (starting at $N$) if

   (LAR1) \  \ $M, N$ are indecomposable.

   (LAR2) \ \  $w \neq 0$.

   (LAR3) \  \ If $D$ is indecomposable then for every non-isomorphism $t \colon N \rt D$ we have $t\circ w = 0$.

\begin{definition}
We say a Krull-Schmidt triangulated category $\Cc$ has AR-triangles if for any indecomposable $M \in \Cc$ there exists a right AR-triangle ending at $M$ and a left AR-triangle starting at $M$.
\end{definition}

\s In a fundamental work Reiten and Van den Bergh, \cite{RV}, related existence of AR-triangles to existence of a Serre functor.
Let $(A,\m)$ be a Noetherian local ring and let $E$ be the injective hull of $k = A/\m$.
Set $(-)^\vee = \Hom_A(-, E)$.
 Let $\Cc$ be a Hom-finite $A$-triangulated category.  By a right Serre-functor  on $\Cc$ we mean an additive functor $F \colon \Cc \rt \Cc$ such that we have isomorphism
 \[
 \eta_{C, D} \colon \Hom_{\Cc}(C, D) \rt \Hom_{\Cc}(D, F(C))^\vee,
 \]
 for any $C, D \in \Cc$ which are natural in $C$ and $D$. It can be shown that if $F$ is a right Serre functor
 then $F$ is fully faithful. If $F$ is also dense then we say $F$ is a Serre functor.
 We will use the following result:
 \begin{theorem}
 \label{RV-main}[see \cite[Theorem I.2.4]{RV}]
 Let $\Cc$ be a Hom-finite $A$-linear triangulated category. Then the following are equivalent
 \begin{enumerate}[\rm (i)]
 \item
 $\Cc$ has AR-triangles.
 \item
 $\Cc$ has a Serre-functor.
 \end{enumerate}
 \end{theorem}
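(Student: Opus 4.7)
The plan is to prove the two directions separately, following the Reiten--Van den Bergh strategy.

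For the direction (ii) $\Rightarrow$ (i), given a Serre functor $F$, I would construct, for every indecomposable $M \in \Cc$, a right AR-triangle of the form
\[
\Sigma^{-1} F(M) \xrightarrow{\ } E_M \xrightarrow{\ } M \xrightarrow{h} F(M).
\]
The morphism $h$ is manufactured from Serre duality as follows. Let $R = \End_{\Cc}(M)$; since $\Cc$ is Krull-Schmidt and Hom-finite, $R$ is a local $A$-algebra with maximal ideal $J$ and residue division ring $k_M = R/J$ which embeds into $E$. Serre duality provides an isomorphism $\eta_{M,M}\colon R \to \Hom_{\Cc}(M, F(M))^\vee$, equivalently a perfect pairing $R \times \Hom_{\Cc}(M, F(M)) \to E$. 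Choose a nonzero $A$-linear functional $\phi \colon R \to E$ that vanishes on $J$ (possible because $R/J$ is nonzero and embeds in $E$) and let $h \in \Hom_{\Cc}(M,F(M))$ be its image under $\eta_{M,M}^{-1}$. Completing $h$ into a triangle and rotating gives a candidate triangle; I would then verify the AR-axioms (RAR1)--(RAR3): property (RAR2) follows since $\phi \neq 0$, and property (RAR3) is obtained by chasing a non-iso $t\colon D\to M$ with $D$ indecomposable through the naturality of $\eta$: the non-isomorphism $t$ lies in the radical of $\Hom_{\Cc}(D,M)$, and naturality converts $h \circ t$ into the functional $\phi$ pre-composed with multiplication by $t$, which lands in $\phi(J) = 0$. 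Left AR-triangles are obtained dually.

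For the direction (i) $\Rightarrow$ (ii), given that $\Cc$ has AR-triangles, I would build the Serre functor. On each indecomposable $M$, read off $F(M)$ from the right AR-triangle ending at $M$, namely $F(M) := \Sigma N$ where $N \to E_M \to M \to \Sigma N$ is the given AR-triangle. Extend additively to all of $\Cc$ using the Krull-Schmidt decomposition. To define $F$ on morphisms, take $f\colon M_1\to M_2$ between indecomposables: if $f$ is an iso one uses the functoriality of AR-triangles; if $f$ is a non-isomorphism, the AR-property forces $h_{M_2} \circ f = 0$, so $f$ lifts through $E_{M_2} \to M_2$, and the corresponding morphism of triangles supplies a map $F(f)\colon F(M_1) \to F(M_2)$, unique up to the indeterminacy of the lifting. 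One then constructs the natural Serre pairing $\eta_{C,D}\colon \Hom(C,D) \to \Hom(D, F(C))^\vee$ by sending $g \colon C\to D$ to the functional $\psi \mapsto (\text{connecting class of } \psi \circ g)$ where one uses the canonical trace obtained from the AR-triangle ending at $C$.

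The main obstacle is in the direction (i) $\Rightarrow$ (ii): showing that the assignment on morphisms is independent of the lifting choices, is additive, and is functorial. This requires a delicate diagram-chase exploiting the defining universal property (RAR3) of the AR-triangle ending at $M_2$ to show that two lifts differ by a factor through the radical, which becomes zero after further composition. Establishing that the resulting $\eta_{C,D}$ is bi-natural and non-degenerate (rather than merely natural in $C$ and $D$ separately on indecomposables) is the technically demanding heart of the argument and is where one truly needs the Hom-finite and $A$-linear hypotheses.
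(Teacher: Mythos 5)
The paper offers no proof of Theorem~\ref{RV-main}: the statement is cited from \cite[Theorem I.2.4]{RV}, and the only argument supplied is the remark immediately following it, asserting that Reiten and Van den Bergh's $k$-linear proof goes through unchanged once $(-)^\vee$ is taken to be Matlis duality $\Hom_A(-,E)$ rather than a vector-space dual. So there is no in-paper proof for your sketch to diverge from; what you have written is an outline of the proof being cited, and it does follow the Reiten--Van den Bergh strategy.

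That said, a few things in the (ii)$\Rightarrow$(i) direction should be tightened, and they touch exactly the point the paper's remark is glossing over. The phrase ``let $h$ be its image under $\eta_{M,M}^{-1}$'' is type-incorrect: $\eta_{M,M}^{-1}$ lands in $\End_{\Cc}(M)$, not in $\Hom_{\Cc}(M,F(M))$. What you want is to apply $(-)^\vee$ to $\eta_{M,M}$ and use Matlis biduality $\Hom_{\Cc}(M,F(M))^{\vee\vee}\cong\Hom_{\Cc}(M,F(M))$ to read $\phi$ as an element $h$ of $\Hom_{\Cc}(M,F(M))$; this biduality is precisely where the $A$-linear setting differs from the $k$-linear one and needs the Hom modules to have finite length (or $A$ complete), i.e., it is the content of the paper's one-line remark. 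Also, $R/J$ need not literally embed in $E$; you only need $\Hom_A(R/J,E)\neq 0$, which holds because $R/J$ is a nonzero finite-length $A$-module. And (RAR1) requires $\Sigma^{-1}F(M)$ to be indecomposable, which uses that a \emph{Serre} functor (dense, hence an equivalence) -- not merely a right Serre functor -- is assumed. Your (RAR3) chase is correct in spirit: naturality of $\eta$ in the second variable gives, for every $g\colon M\to D$, that $\eta_{M,D}(g)(h\circ t)=\eta_{M,M}(t\circ g)(h)=\phi(t\circ g)=0$ since $t\circ g\in J$ whenever $t\colon D\to M$ is a non-isomorphism with $D$ indecomposable, and nondegeneracy then forces $h\circ t=0$. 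For (i)$\Rightarrow$(ii) you explicitly defer the hard part (well-definedness and functoriality of $F$ on morphisms, binaturality and nondegeneracy of $\eta$, and denseness), which you flag honestly; this leaves that direction an outline rather than a proof, but the paper itself proves nothing here either, so you are no worse off than the source you are reconstructing.
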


 \begin{remark}
 In \cite{RV}, Theorem I.2.4 is proved for $k$-linear Hom-finite triangulated categories where $k$ is a field. However the same proof generalizes to our situation.
 \end{remark}

 \s \emph{Irreducible maps and AR-triangles:} First note that the notion of split \\  monomorphism and split epimorphism makes sense in any triangulated category. They are called section and retraction respectively in \cite{Happel}. Throughout $(A,\m)$ is a local Noetherian ring,  $\Cc$ is a Hom-finite $A$-triangulated category
 \s A map $f \colon X \rt Y$ in $\Cc$ is said to be irreducible if
 \begin{enumerate}
   \item $f$ is not a section or a retraction.
   \item if we have the following commutative diagram in $\Cc$
   \[
\xymatrix{
\
&U
\ar@{->}[dr]^{h}
 \\
X
\ar@{->}[rr]_{f}
\ar@{->}[ur]^{g}
&\
&Y
}
\]
 \end{enumerate}
 then either $g$ is a section or $h$ is a retraction.

 \s Let $s \colon  M \rt E \rt X \rt \sum M$ be an AR-triangle  ending in $X$. If $Z$ is indecomposable and $Z \rt X$ is an irreducible map then $Z$ is a direct summand of $E$. Similarly if $M \rt Z$ is an irreducible map then $Z$ is a direct summand of $E$, see \cite[Chapter 1, 4.3]{Happel}.

 \s Let $s \colon  M \rt E \xrightarrow{\phi} X \rt\sum M$ be an AR-triangle  ending in $X$. Let $Z$ be an indecomposable direct summnand of $E$. Say $E = Z \oplus Y$.   Denote
$\phi = (f,g)$ along this decomposition. Then $f \colon Z \rt M$ is an irreducible map
 (this can be proved in a similar manner to \cite[2.1.2]{Y}). An analogous  result holds for
 AR-triangles starting at $M$, see \cite[(2.1.2)', p. \ 14]{Y}.
 \s \label{irr} Assume $A/\m$ is an algebraically closed field. Let $Z$ be indecomposable.  Let $s \colon  M \rt E \rt X \rt \sum M$ be an AR-triangle  ending in $X$. Let $n$ be the number of copies of $Z$ in a decomposition of $E$ into indecomposables. Set $irr(Z, X) = n$. Then $irr(M, Z) = n$ (this can be proved in a similar manner to \cite[5.7]{Y}).
\section{Proof of Theorem \ref{ar}}
In this section we give:
\begin{proof}[Proof of Theorem \ref{ar}]
Let $\Xb, \Yb \in \K = \K^2_f(\proj A)$ and let $\D = \D^2_{fg}(\Mod A)$. Let $\mathbf{p} \colon \D \rt \K$ be the projective resolution functor. Then we have
\begin{align*}
  \Hom_\K(\Xb, \Yb)^\vee &= \left(H^0(\cHom_\K(\Xb, \Yb))\right)^\vee \\
   &\cong H^0\left( \Hom_A( \cHom_\K(\Xb, \Yb), E) \right)\\
   &\cong H^0\left( \Hom_A(\Yb\ctensor \Xb^*, E) \right) \ \\
  &\cong H^0\left(\cHom_\K(\Yb, (\Xb^*)^\vee) \right) \\
   &= \Hom_\K(\Yb, (\Xb^*)^\vee) \\
   &\cong\Hom_\mathcal{D}(\Yb, (\Xb^*)^\vee) \\
   &\cong \Hom_\mathcal{D}(\Yb, \mathbf{p}\left((\Xb^*)^\vee\right)) \\
   &\cong \Hom_\mathcal{K}(\Yb, \mathbf{p}\left((\Xb^*)^\vee\right))
\end{align*}
Thus we have an isomorphism
$$ \Hom_\K(\Xb, \Yb) \cong \left(\Hom_\mathcal{K}(\Yb, \mathbf{p}\left((\Xb^*\right))^\vee)\right)^\vee.$$
Clearly the above ismorphism is natural in $\Xb, \Yb$. Thus the functor $F$ defined as $F(\Xb^*) = \mathbf{p}\left((\Xb^*)\right)^\vee$ is a right Serre-functor.
It suffices to show $F$ is dense to conclude.

The functor $F$ is the composite of the following functors
\[
\K \xrightarrow{*} \K^{op} \xrightarrow{\Hom_A(-, E)} \K_f^c( add (E))\xrightarrow{\mathbf{p}}  \K
\]
It suffices to show each functor above is an equivalence. The functors  \\ $ * = \Hom_A(-,A)$ and  $\Hom_A(-, E)$ are clearly equivalences. It suffices to prove that $\mathbf{p}$ is an equivalence.

 We first prove that for any $\Xb \in \K$ there exists $\Yb \in  \K_f^c( add (E))$ and a quism $\Xb \rt \Yb$. Consider $\Zb = \Hom_A(\Xb, E)$ then $\Zb \in  \K_f^c( add (E))$. Let $\Wb \rt \Zb$ be a minimal
projective resolution of $\Zb$ which exists as $H^i(\Zb)$ has finite length for all $i$.
Then dualizing we obtain a quism $\Zb^\vee \rt \Wb^\vee = \Yb$. It remains to note that
 $\Zb^\vee \cong \Xb$. Thus we have a functor $\mathbf{i} \colon \K \rt \K_f^c( add (E))$
 (the injective resolution functor) which is clearly the inverse of $\mathbf{p}$.

 Thus $F$ is an equivalence and so $F$ is a Serre-functor. Hence $\K$ has AR-triangles
\end{proof}

\section{AR-quiver when $\dim A = 1$}
In this section $(A,\m)$ is a DVR with $\m = (x )$ and  $k =A/\m$ an algebraically closed field. In this section we describe the AR-quiver of $\K = \K^2_f(\proj A)$. We first need to describe all the indecomposables in $\K$.

\s Fix $j \geq 1$. Let $\Kb(j)$ be the complex defined as $\Kb(j)^n = A$ for all $n \in \Z$. Define $\partial^{2n+1}(t) = x^j t$ and $\partial^{2n} = 0$.
We first prove
\begin{proposition}\label{ind}
Let $\Xb \in \K$ be indecomposable. Then $\Xb \cong \Kb(j)$ or $\Xb \cong \Kb(j)[1]$ for some $j \geq 1$.
\end{proposition}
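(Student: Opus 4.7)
The plan is to reduce $\Xb$ to a minimal complex and then apply Smith normal form over the DVR $A$. Since $\Xb$ is indecomposable and nonzero in $\K$, Proposition~\ref{acyclic-c} ensures $H^*(\Xb)\neq 0$, and Theorem~\ref{red-min} lets us assume $\Xb\in\C^2(\proj A)$ is itself minimal and nonzero. Finite length of $H^*(\Xb)$ means the cohomologies vanish after localizing at the zero prime, so $\Xb$ becomes acyclic over $\operatorname{Frac}(A)$; a rank count at both positions then forces $\rank \Xb^0=\rank \Xb^1=:n$. Thus we are reduced to studying a pair $\partial_0,\partial_1\in\operatorname{Mat}_n(\m)$ with $\partial_0\partial_1=0=\partial_1\partial_0$.

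The second step is to put $\partial_1$ in Smith normal form via an invertible change of basis on the pair $(\Xb^0,\Xb^1)$ in $\C^2(\proj A)$, making $\partial_1=\operatorname{diag}(x^{e_1},\ldots,x^{e_r},0,\ldots,0)$ with each $e_i\geq 1$ and $r=\rank\partial_1$. The relations $\partial_1\partial_0=0$ and $\partial_0\partial_1=0$, combined with $A$ being a domain, then force the matrix of $\partial_0$ to vanish in rows $1,\ldots,r$ and in columns $1,\ldots,r$. This yields a genuine direct-summand decomposition $\Xb=\Yb\oplus\Zb$ in $\C^2(\proj A)$, where $\Yb$ has rank $r$ with diagonal $\partial_1|_{\Yb}$ and $\partial_0|_{\Yb}=0$, and $\Zb$ has rank $n-r$ with $\partial_1|_{\Zb}=0$.

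The diagonal structure on $\Yb$ immediately splits it as $\bigoplus_{i=1}^r\Kb(e_i)$. For $\Zb$, finite length of $H^0(\Zb)=\ker\partial_0|_{\Zb}$, together with the fact that any submodule of a free $A$-module is free, forces $\ker\partial_0|_{\Zb}=0$; Smith normal form applied to the injective $\partial_0|_{\Zb}$ then splits $\Zb$ as $\bigoplus_j\Kb(f_j)[1]$, where we use that $\Kb(j)[1]$ is isomorphic in $\C^2(\proj A)$ to the rank-one complex with $\partial_0=x^j$, $\partial_1=0$ (the sign from the shift is absorbed by negating a basis vector). Each $\Kb(e_i)$ and $\Kb(f_j)[1]$ is minimal and nonzero, hence nonzero in $\K$ by Remark~\ref{min-nonzero}; indecomposability of $\Xb$ in the Krull-Schmidt category $\K$ (Proposition~\ref{ks-f}) then forces this direct sum to have exactly one summand, giving $\Xb\cong\Kb(j)$ or $\Xb\cong\Kb(j)[1]$ for some $j\geq 1$.

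The main technical point I expect to verify carefully is that the Smith normal form change of basis on $(\Xb^0,\Xb^1)$ is compatible with the second differential $\partial_0$ and that the zeros forced on $\partial_0$ genuinely produce a direct-summand splitting in $\C^2(\proj A)$, rather than merely a block form for the matrices. The remainder is standard PID module theory combined with the already established Krull-Schmidt property and minimality results.
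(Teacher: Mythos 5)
Your proof is correct. Both you and the paper run on the same engine — reduce to a minimal complex and apply Smith normal form over the DVR — so I would call the approach essentially the same. The small differences are worth noting. First, the paper peels off a single summand: after putting $\partial^1$ in Smith form it observes that the span of the first basis pair $u_1, v_1$ is a direct summand isomorphic to $\Kb(a_1)$, and indecomposability finishes the argument; you instead carry out the full block splitting $\Xb = \Yb \oplus \Zb$ with $\partial_0|_{\Yb}=0$ and $\partial_1|_{\Zb}=0$, and then decompose each block completely. Your route is a little longer but yields a complete classification of all objects of $\K$ in one pass, and it handles the cases $\partial_1 = 0$ and $\partial_1 \neq 0$ symmetrically, whereas the paper tacitly replaces $\Xb$ by $\Xb[1]$ to arrange $\partial^1 \neq 0$. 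Second, for the rank equality $\rank \Xb^0 = \rank \Xb^1$ the paper cites Lemma~\ref{stable}, while you derive it directly by localizing at $(0)$ and rank-counting in the resulting acyclic complex of $\operatorname{Frac}(A)$-spaces; both are fine. The one spot you rightly flagged as needing care — that the vanishing of rows and columns $1,\ldots,r$ of $\partial_0$ forced by $\partial_1\partial_0 = 0 = \partial_0\partial_1$ gives a genuine complex splitting rather than just a matrix block form — does go through, since the new bases make both $\Xb^0$ and $\Xb^1$ split compatibly with both differentials.
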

Before proving  the above result we need to recall the following result from  \cite[Lemma 7.6]{P}.
\begin{lemma}\label{stable}
Let $\Yb \in \K$. Then $\rank_A \Yb^i  = \rank \Yb^{i+1}$ for all $i \in \Z$.
\end{lemma}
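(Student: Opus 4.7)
The plan is to reduce the statement to the equality $\rank_A \Yb^0 = \rank_A \Yb^1$ (which is all $2$-periodicity requires), and to prove this by localizing at the generic point of $\Spec A$ and then doing a trivial linear-algebra count on an acyclic $2$-periodic complex of vector spaces over the fraction field.

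First I would set up notation. Since $\Yb \in \K = \K^2_f(\proj A)$, we have $\Yb^{2n} = \Yb^0$ and $\Yb^{2n+1} = \Yb^1$ for all $n$, so the sequence $\rank \Yb^i$ alternates between $r_0 := \rank \Yb^0$ and $r_1 := \rank \Yb^1$. Hence the lemma is equivalent to $r_0 = r_1$. Write $\partial_0 \colon \Yb^0 \to \Yb^1$ and $\partial_1 \colon \Yb^1 \to \Yb^0$ for the two differentials; $2$-periodicity gives $\partial_1\partial_0 = 0$ and $\partial_0\partial_1 = 0$.

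Next I would localize at the zero ideal of the DVR. Let $K = A_{(0)}$ be the fraction field. Since $\Yb$ has finite-length cohomology, each $H^i(\Yb)$ is supported only at $\m$, so $H^i(\Yb)_{(0)} = H^i(\Yb \otimes_A K) = 0$ for every $i$. Thus $\Yb \otimes_A K$ is an acyclic $2$-periodic complex of finite-dimensional $K$-vector spaces
\[
 \cdots \rt K^{r_0} \xrightarrow{\partial_0 \otimes K} K^{r_1} \xrightarrow{\partial_1 \otimes K} K^{r_0} \rt \cdots.
\]

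Finally, a rank count finishes the argument. Set $a = \dim_K \image(\partial_0 \otimes K)$. Exactness at $\Yb^1 \otimes K$ gives $\image(\partial_0 \otimes K) = \ker(\partial_1 \otimes K)$, so $\dim_K \ker(\partial_1 \otimes K) = a$, whence $\dim_K \image(\partial_1 \otimes K) = r_1 - a$. Exactness at $\Yb^0 \otimes K$ then gives $\image(\partial_1 \otimes K) = \ker(\partial_0 \otimes K)$, so $r_1 - a = r_0 - a$, i.e.\ $r_0 = r_1$.

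There is essentially no obstacle here: the only input beyond $2$-periodicity is that finite-length $A$-modules die after inverting $x$, which is automatic since their support is $\{\m\}$. The argument did not use the standing assumption that $k$ is algebraically closed, nor regularity of $A$ beyond it being a DVR (one only needs that $A$ is a domain and that finite-length modules are torsion).
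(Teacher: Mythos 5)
Your proof is correct. For this particular lemma the paper does not supply its own argument; it simply cites \cite[Lemma 7.6]{P}, so there is no in-text proof to compare against. Your route — tensoring with the fraction field $K = A_{(0)}$ to kill the finite-length cohomology, and then running rank--nullity on the resulting acyclic $2$-periodic complex of finite-dimensional $K$-vector spaces — is clean and self-contained, and the count $\dim\ker(\partial_1\otimes K) = a$, hence $\dim\image(\partial_1\otimes K) = r_1 - a = \dim\ker(\partial_0\otimes K) = r_0 - a$, is carried out correctly using exactness at both spots. Your closing remark is also accurate and worth keeping: the argument uses only that $A$ is a local domain of positive dimension (so finite-length modules vanish after localizing at the generic point) and that $\Yb$ is a $2$-periodic complex of finite free modules; neither the DVR structure beyond that nor the algebraically closed residue field plays any role.
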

We  now give
\begin{proof}[Proof of Proposition \ref{ind}]
By Lemma \ref{stable} it follows that $\Kb(j)$ (and hence \\ $\Kb(j)[1]$ ) are indecomposable for all $j \geq 1$.

Let $\Xb \in \K$ be indecomposable. We may assume $\Xb$ is minimal and $\partial^1 \neq 0$.
Also note that by Lemma \ref{stable} we have $\rank_A \Xb^i  = \rank \Xb^{i+1}$ for all $i \in \Z$.
 We consider the Smith form of $\partial^1$. Thus there exists basis $\{ u_1, \ldots, u_m \}$ of $\Xb^1$ and a basis $\{ v_1, \ldots, v_m \}$ of $\Xb^2$ with $\partial^1(u_j) = x^{a_j}v_j$ for $j = 1, \ldots, r$ and $\partial^1(u_j) = 0$ for $j > r$. It is elementary to see that $\partial^2(v_1)  = 0$ and $\partial^2(v_j) \in Au_2 + \cdots Au_m$. Thus $\Kb(a_1)$ is a direct summand of $\Xb$. As $\Xb$ is indecomposable it follows that $\Xb \cong \Kb(a_1)$.
\end{proof}

\begin{lemma}\label{irr-1}
There does not exist any irreducible maps from $\Kb(1)$ to itself.
\end{lemma}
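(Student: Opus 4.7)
The plan is to show something stronger: the endomorphism ring $\End_\K(\Kb(1))$ is the residue field $k$, so every endomorphism is either zero or an isomorphism, and neither can be irreducible.

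First I would compute $\Hom_{\C^2(\proj A)}(\Kb(1),\Kb(1))$. A $2$-periodic chain map is determined by multiplication maps $\phi_0, \phi_1 \in \End_A(A) = A$. The only nontrivial chain condition is $\phi_0 \circ \partial^1 = \partial^1 \circ \phi_1$, i.e.\ $\phi_0 \cdot x = x \cdot \phi_1$, forcing $\phi_0 = \phi_1 = a$ for some $a \in A$. Next I would compute null-homotopies: a $2$-periodic homotopy is a pair $s_0, s_1 \in A$, and the relation $\phi = \partial s + s \partial$ gives $\phi_0 = x s_0$ and $\phi_1 = s_0 \cdot x$, both of which collapse to $a \in \m$. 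Hence
\[
\End_\K(\Kb(1)) \;=\; A/\m \;=\; k.
\]

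Since $k$ is a field, every nonzero element of $\End_\K(\Kb(1))$ is a unit, hence an isomorphism in $\K$; in particular it is both a section and a retraction, so it violates condition (1) in the definition of an irreducible map. It remains to rule out $f = 0$. The zero map factors as $\Kb(1) \rt 0 \rt \Kb(1)$ through the zero complex, which lies in $\K$. Neither factor is a section/retraction: if $\Kb(1) \rt 0$ were a section, then $\Kb(1)$ would be a direct summand of $0$, forcing $\Kb(1) = 0$ in $\K$, contradicting Remark \ref{min-nonzero} applied to the minimal nonzero complex $\Kb(1)$; the other case is symmetric. So $0$ fails condition (2), and we conclude that no map $\Kb(1) \rt \Kb(1)$ is irreducible.

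There is no serious obstacle here; the only subtle point is correctly identifying the null-homotopy relation in the $2$-periodic setting, which is why I would do the $\phi_0, \phi_1$ bookkeeping carefully before collapsing to the single residue-class datum $a \bmod \m$.
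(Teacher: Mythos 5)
Your proposal is correct and takes essentially the same route as the paper: both boil down to computing that every endomorphism of $\Kb(1)$ in $\K$ is either zero or an isomorphism. The paper phrases this directly (any $f$ is given by multiplication by $a_f = x^m u$; if $m\geq 1$ it is null-homotopic, if $m = 0$ it is an isomorphism), while you package the same computation as $\End_\K(\Kb(1)) \cong A/\m = k$; these are identical. The only difference is that you spell out why the zero map is not irreducible (factoring through $0$ and invoking Remark \ref{min-nonzero}), which the paper leaves implicit in ``The result follows.''
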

\begin{proof}
Let $f \colon \Kb(1) \rt \Kb(1)$ be a $2$-periodic map. Let $f_i \colon  \Kb(1)^i \rt \Kb(1)^i$ be the components of $f$. Then note that $f_1 = f_2$ is multiplication by some $a_f$. Set $a_f = x^mu$. If $m \geq 1$ then note $f$ is null-homotopic (a hompotopy is given by $s_{2n} = x^{m-1}u$ and $s_{2n+1} = 0$). If $u \neq 0$ and $m = 0$ then note that
$f$ is an isomorphism. Thus any map $f \colon \Kb(1) \rt \Kb(1)$ is an isomorphism or zero. The result follows.
\end{proof}

We need the following computation of  the Auslander-Reiten translate of $\Kb(i)$.
\begin{lemma}\label{translate}
Fix $j \geq 1$. The Auslander-Reiten translate of $\Kb(i)$ is itself.
\end{lemma}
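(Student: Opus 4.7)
The plan is to identify the Auslander--Reiten translate $\tau$ on $\K = \K^2_f(\proj A)$ with $F\circ \Sigma^{-1}$, where $F$ is the Serre functor constructed in the proof of Theorem \ref{ar}, namely $F(\Xb) = \mathbf{p}\bigl((\Xb^*)^{\vee}\bigr)$ with $\mathbf{p}$ the projective resolution functor and $(-)^{\vee} = \Hom_A(-,E)$. Since $\K$ is $2$-periodic, $[-1] = [1]$, so it suffices to show $F(\Kb(j)) \cong \Kb(j)[1]$ in $\K$; then $\tau(\Kb(j)) \cong \Kb(j)[1][-1] = \Kb(j)$.

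First I compute $\Kb(j)^*$. Each term of $\Kb(j)$ is $A$, so each term of $\Kb(j)^* = \Hom_A(\Kb(j),A)$ is $A$, and the differentials (using the sign conventions of \ref{dual}) become $0$ in odd degrees and $\pm x^j$ in even degrees. A straightforward sign change shows $\Kb(j)^* \cong \Kb(j)[1]$ in $\C^2(\proj A)$ (this parallels Proposition \ref{W-class}(2)). Consequently $(\Kb(j)^*)^{\vee}$ is termwise equal to $E$, with cohomology computed by Matlis duality: since $\Hom_A(-,E)$ is exact,
\[
H^i\bigl((\Kb(j)^*)^\vee\bigr) \cong \Hom_A(H^{-i}(\Kb(j)^*),E) \cong \Hom_A(H^{1-i}(\Kb(j)),E).
\]
Since $H^{\mathrm{even}}(\Kb(j)) = A/x^j A$ and $H^{\mathrm{odd}}(\Kb(j)) = 0$, and since $A/x^j A$ is Matlis self-dual (as $\Hom_A(A/x^j A, E) \cong x^{-j}A/A \cong A/x^j A$ for the DVR $A$), we find $H^{\mathrm{odd}}((\Kb(j)^*)^\vee) = A/x^j A$ and $H^{\mathrm{even}}((\Kb(j)^*)^\vee) = 0$.

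Next, $F(\Kb(j)) = \mathbf{p}((\Kb(j)^*)^\vee)$ is the image of the indecomposable object $\Kb(j)$ under the equivalence $F$ (a Serre functor is automatically an equivalence), so $F(\Kb(j))$ is itself indecomposable in $\K$. By Proposition \ref{ind}, $F(\Kb(j)) \cong \Kb(l)$ or $\Kb(l)[1]$ for some $l \geq 1$. Since the projective resolution functor $\mathbf{p}$ preserves cohomology, $F(\Kb(j))$ has cohomology $A/x^j A$ in odd degrees and $0$ in even degrees, which singles out $F(\Kb(j)) \cong \Kb(j)[1]$: the length of the odd cohomology forces $l = j$, and the parity (odd versus even) rules out $\Kb(j)$ in favor of $\Kb(j)[1]$.

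Finally, $\tau(\Kb(j)) = F(\Kb(j))[-1] \cong \Kb(j)[1][-1] = \Kb(j)$. The main subtlety is ensuring that the identification of the indecomposable $F(\Kb(j))$ rests only on its isomorphism class in $\K$, which is why I rely on Proposition \ref{ind} plus the cohomology invariant rather than attempting to build an explicit quasi-isomorphism between $(\Kb(j)^*)^\vee$ and $\Kb(j)[1]$. The sign-tracking in the identification $\Kb(j)^* \cong \Kb(j)[1]$ and the self-duality of $A/x^j A$ under Matlis duality are the only calculations that require any care.
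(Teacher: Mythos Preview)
Your proof is correct and follows essentially the same route as the paper's: identify the translate as $F[-1]$, compute $\Kb(j)^*\cong\Kb(j)[1]$, compute the cohomology of $(\Kb(j)^*)^\vee$ via Matlis duality, and conclude $F(\Kb(j))\cong\Kb(j)[1]$. The only difference is that you spell out the last step---using that $F$ is an equivalence so $F(\Kb(j))$ is indecomposable, and then invoking Proposition~\ref{ind} together with the cohomology data---whereas the paper simply asserts ``It follows that $F(\Kb(i))=\Kb(i)[1]$.''
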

\begin{proof}
Let $F \colon \K \rt \K$ be the Serre-functor on $\K$. Then by \cite[1.2.3]{RV} the Auslander-Reiten translate of $\Kb(i)$ is given by $F[-1]$.

We note that $\Kb(i)^* \cong \Kb(i)[1]$. The cohomology of $(\Kb(i)^*)^\vee$ is $(A/(x^i)^\vee = A/(x^i)$ in degrees odd and zero in degrees even. It follows that $F(\Kb(i)) = \Kb(i)[1]$. The result follows.
\end{proof}

We now compute the AR-quiver of $\K$.
\begin{theorem}\label{quiver}
\begin{enumerate}[\rm (1)]
\item
The AR-triangles of $\K$ are as follows
\begin{align*}
\Kb(1) &\rt \Kb(2)\rt \Kb(1) \rt \Kb(1)[1], \\
\Kb(i) &\rt \Kb(i-1)\oplus \Kb(i+1) \rt \Kb(i) \rt \Kb(i)[1] \quad \text{for $i \geq 2$,} \\
\Kb(1)[1] &\rt \Kb(2)[1]\rt \Kb(1)[1] \rt \Kb(1), \\
\Kb(i)[1] &\rt \Kb(i-1)[1]\oplus \Kb(i+1)[1] \rt \Kb(i)[1] \rt \Kb(i) \quad \text{for $i \geq 2$.}
\end{align*}
\item
The AR-quiver of $\K$ is
\begin{align*}
\Kb(1) &\leftrightarrows \Kb(2) \leftrightarrows \Kb(3) \leftrightarrows \cdots \leftrightarrows \Kb(i) \leftrightarrows \cdots  \\
\Kb(1)[1] &\leftrightarrows \Kb(2)[1]  \leftrightarrows \Kb(3)[1] \leftrightarrows \cdots \leftrightarrows \Kb(i)[1] \leftrightarrows \cdots
\end{align*}
\end{enumerate}
\end{theorem}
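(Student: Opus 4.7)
The plan combines the classification of indecomposables from Proposition \ref{ind}, the existence of AR-triangles from Theorem \ref{ar}, and the AR-translate computation in Lemma \ref{translate}; the middle term of each AR-triangle is then pinned down by a Hom-and-radical computation.

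By \ref{ind} every indecomposable in $\K$ is $\Kb(j)$ or $\Kb(j)[1]$ for some $j \geq 1$, and by \ref{translate} (together with the shift autoequivalence) the AR-translate $\tau$ fixes each indecomposable. Hence every AR-triangle ending at $\Kb(i)$ takes the form $\Kb(i) \to E_i \to \Kb(i) \to \Kb(i)[1]$, so the task reduces to identifying $E_i$. In the Krull-Schmidt setting $E_i = \bigoplus_Z Z^{n_Z}$ with $n_Z = irr(Z, \Kb(i))$ the number of irreducible maps (see \ref{irr}); since $A/\m$ is algebraically closed this equals $\dim_k \operatorname{rad}(Z, \Kb(i)) / \operatorname{rad}^2(Z, \Kb(i))$.

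The core step is an explicit Hom-and-composition computation. Writing out $2$-periodic chain maps and null-homotopies for the differentials $\partial^{2n+1} = x^j, \partial^{2n} = 0$ of $\Kb(j)$ yields
\[
\Hom_\K(\Kb(j),\Kb(i)) \cong A/(x^{\min(i,j)}) \cong \Hom_\K(\Kb(j)[1],\Kb(i)),
\]
each cyclic of length $\min(i,j)$; the first has generator $(x^{i-j}, 1)$ for $j \leq i$ (symmetric formula otherwise), the second has generator $(1, 0)$, since the chain-map condition forces the degree-$1$ component to vanish. Compositions are transparent: any composition $\Kb(j) \to \Kb(m)[1] \to \Kb(i)$ vanishes at the chain level, while compositions $\Kb(j) \to \Kb(l) \to \Kb(i)$ multiply parameters. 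From these facts one derives: (a) for $|i-j| \geq 2$ the generator of $\Hom_\K(\Kb(j),\Kb(i))$ factors through adjacent indecomposables $\Kb(j) \to \Kb(j \pm 1) \to \cdots \to \Kb(i)$ and so lies in $\operatorname{rad}^2$, giving $irr(\Kb(j), \Kb(i)) = 0$; (b) for $|i-j| = 1$ every composition acquires an extra factor of $x$, so $\operatorname{rad}^2 = x \cdot \Hom_\K$ and $irr(\Kb(j), \Kb(i)) = 1$; (c) $irr(\Kb(i), \Kb(i)) = 0$ by Lemma \ref{irr-1}; and (d) for any $j$, the generator $(1, 0)$ of $\Hom_\K(\Kb(j)[1], \Kb(i))$ is the composition $\Kb(j)[1] \xrightarrow{(1, 0)} \Kb(l) \xrightarrow{(1, x^{l-i})} \Kb(i)$ for any $l > i$, both factors lying in the radical since they connect non-isomorphic indecomposables, so $irr(\Kb(j)[1], \Kb(i)) = 0$. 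Symmetric computations handle Hom-spaces into $\Kb(i)[1]$.

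Assembling these multiplicities, $E_1 = \Kb(2)$ and $E_i = \Kb(i-1) \oplus \Kb(i+1)$ for $i \geq 2$, with analogous statements in the shifted orbit; this proves (1). Part (2) follows immediately: the AR-quiver has arrows $\Kb(i-1) \leftrightarrows \Kb(i) \leftrightarrows \Kb(i+1)$ (and analogously $\Kb(i-1)[1] \leftrightarrows \Kb(i)[1] \leftrightarrows \Kb(i+1)[1]$), with no arrows bridging the two shift-orbits, producing the two infinite tubes as displayed. The main obstacle is the cross-shift factorization in (d): although the generator $(1, 0)$ of $\Hom_\K(\Kb(j)[1], \Kb(i))$ looks like a `free' element, it can be written as a composition of two radical maps via an arbitrarily large $\Kb(l)$, and this exploitation of the infinite family of indecomposables is precisely what forces the two components of the AR-quiver to be disconnected.
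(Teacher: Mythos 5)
Your proof is correct in substance but takes a genuinely different route from the paper's. The paper computes the middle terms $E_i$ by a length-of-cohomology constraint plus induction: it uses $\tau = \mathrm{id}$ to fix the shape of the AR-triangle, applies the long exact sequence in cohomology to get $H^1(E_i) = 0$ and $\ell(H^0(E_i)) = 2i$, and then successively eliminates all candidate decompositions of $E_i$ by bookkeeping lengths and invoking symmetry of $irr$ (needing Lemma \ref{irr-1} only once, for the base case $i = 1$). You instead compute all the Hom-spaces $\Hom_\K(\Kb(j),\Kb(i))$ and $\Hom_\K(\Kb(j)[1],\Kb(i))$ explicitly (each is cyclic of length $\min(i,j)$), identify the radical filtration, and read off $irr(Z,\Kb(i)) = \dim_k \operatorname{rad}/\operatorname{rad}^2$ directly. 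Your approach gives strictly more information (the full Hom-structure of $\K$) at the cost of more hands-on linear algebra; the paper's is shorter because the cohomology constraint $H^1(E) = 0$ automatically kills all $\Kb(j)[1]$ summands without any computation, sidestepping your point (d) entirely. Both analyses correctly explain why the two shift-orbits are disconnected: in the paper it is forced by $H^1(E_i)=0$, in yours by the observation that any cross-shift composition at the chain level is zero.

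One small gap: in your step (c) you cite Lemma \ref{irr-1} for $irr(\Kb(i), \Kb(i)) = 0$ for all $i$, but that lemma only treats $\Kb(1)$ (where $\Hom_\K(\Kb(1),\Kb(1)) = A/(x)$ and the radical is already zero). For $i \geq 2$ the radical $(x)/(x^i)$ is nonzero and you need the same factorization argument as in (a): the radical generator $x\cdot 1_{\Kb(i)}$ factors as $\Kb(i)\to\Kb(i-1)\to\Kb(i)$ through radical maps, so $\operatorname{rad} = \operatorname{rad}^2$. Your framework supplies this; the attribution is just wrong. A smaller point of care worth flagging: you should verify, as I did above, that compositions routed through the shifted orbit $\Kb(l)[1]$ contribute nothing to $\operatorname{rad}^2(\Kb(j),\Kb(i))$ -- they vanish identically at the chain level -- so that your radical computations in (a) and (b) are genuinely exhaustive.
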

\begin{proof}
The assertion (2) follows from (1). Also the third and fourth assertion of (1) follows by taking the shift operator $[1]$ and noting that $[2]$ is identity.

Let the Auslander-Reiten triangle ending  at $\Kb(1)$ be given as
\[
\Kb(1) \rt E\rt \Kb(1) \rt \Kb(1)[1]
\]
Taking cohomology we get that $H^1(E)  = 0$ and $\ell(H^0(E)) = 2$. It follows that
$E = \Kb(2)$ or $E = \Kb(1) \oplus \Kb(1)$. If the latter case holds then there is an irreducible map  $\Kb(1) \rt \Kb(1)$ which is not possible by Lemma \ref{irr-1}.

We prove the second assertion of (1) by induction on $i$.
Let the Auslander-Reiten triangle ending  at $\Kb(2)$ be given as
\[
\Kb(2) \rt E \oplus \Kb(1)\rt \Kb(2) \rt \Kb(2)[1]
\]
Taking cohomology we get that $H^1(E)  = 0$ and $\ell(H^0(E)) = 3$. It follows that
$E = \Kb(3)$ or $E = \Kb(2) \oplus \Kb(1)$ or $E = \Kb(1)^3$. The latter two cases are not possible as $irr(\Kb(1), \Kb(2)) = irr(\Kb(2),\Kb(1)) = 1$, see \ref{irr}.

We now assume that for all $i \leq r$ with $r \geq 2$ the AR-triangle ending at $\Kb(i)$ is given by
\[
\Kb(i) \rt \Kb(i-1) \oplus \Kb(i+1)\rt \Kb(i) \rt \Kb(i)[1].
\]
Let the AR-triangle ending at $\Kb(r+1)$ be given as
\[
\Kb(r+1) \rt E \oplus \Kb(r)\rt \Kb(r+1) \rt \Kb(r+1)[1]
\]
Taking cohomology we get that $H^1(E)  = 0$ and $\ell(H^0(E)) = r+2$.
We note that

(a) if $i < r$ and $\Kb(i)$ is a summand of $E$ then we have an irreducible map from $\Kb(r+1)$ to $\Kb(i)$ which is a contradiction.\\
(b) If $\Kb(r)$ is a direct summand of $E$ then $$ irr(\Kb(r+1), \Kb(r)) = irr(\Kb(r), \Kb(r+1)) \geq 2$$ which contradicts our inductive assumption that $irr(\Kb(r+1), \Kb(r)) = 1 $, see \ref{irr}.\\
(c)If $\Kb(r+1)$ is a direct summand of $E$ then note it follows that $\Kb(1)$ is also a summand of $E$, contradicting (a).\\
(d) As $\ell(H^0(E)) = r + 2$ note that $\Kb(i)$ is not a direct summand of $E$ for $i \geq r+3$.

Thus the only possible summand of $E$ is $\Kb(r+2)$ and as $\ell(H^0(E)) = \ell(\Kb(r+2)) = r+2$ it follows that $E = \Kb(r+2)$.
\end{proof}
 
\end{document}